\newcommand\reallywidehat[1]{%
	\savestack{\tmpbox}{\stretchto{%
			\scaleto{%
				\scalerel*[\widthof{\ensuremath{#1}}]{\kern-.6pt\bigwedge\kern-.6pt}%
				{\rule[-\textheight/2]{1ex}{\textheight}}
			}{\textheight}%
		}{0.5ex}}%
	\stackon[1pt]{#1}{\tmpbox}%
}
\numberwithin{equation}{section}
\newtheorem{theorem}{Theorem}[section]
\newaliascnt{lemma}{theorem}
\newtheorem{lemma}[lemma]{Lemma}
\newaliascnt{proposition}{theorem}
\newtheorem{proposition}[proposition]{Proposition}
\newaliascnt{assumption}{theorem}
\newtheorem{assumption}[assumption]{Assumption}
\newaliascnt{corollary}{theorem}
\newaliascnt{definition}{theorem}
\newtheorem{definition}[definition]{Definition}
\newaliascnt{example}{theorem}
\newaliascnt{remark}{theorem}
\newtheorem{remark}[remark]{Remark}
\newaliascnt{hypothesis}{theorem}
\newaliascnt{property}{theorem}
\let\originalleft\left
\let\originalright\right
\renewcommand{\left}{\mathopen{}\mathclose\bgroup\originalleft}
\renewcommand{\right}{\aftergroup\egroup\originalright}
\newcommand{\doublewidetilde}[1]{{%
		\mathpalette\double@widetilde{#1}%
}}
\newcommand{\double@widetilde}[2]{%
	\sbox\z@{$\m@th#1\widetilde{#2}$}%
	\ht\z@=.9\ht\z@
	\widetilde{\box\z@}%
}
\renewcommand{\d}{\/\mathrm{d}\/}
\def\w{\textbf{W}^{\varepsilon}_{{\theta}^{\varepsilon}}}
\def\L{\mathrm{L}}
\def\A{\mathrm{A}}
\def\I{\mathrm{I}}
\def\F{\mathrm{F}}
\def\C{\mathrm{C}}
\def\h{\mathbf{h}}
\def\J{\mathrm{J}}
\def\B{\mathcal{B}}
\def\D{\mathrm{D}}
\def\X{\mathbb{X}}
\def\x{\mathbf{x}}
\def\v{\mathbf{v}}
\def\V{\mathbb{V}}
\def\w{\mathbf{w}}
\def\G{\mathbb{G}}
\def\no{\nonumber}
\def\V{\mathbb{V}}
\def\u{\mathbf{u}}
\def\H{\mathbb{H}}
\def\n{\mathbf{n}}
\def\g{\mathbf{g}}
\def\ou{\overline{\mathbf{u}}}
\newcommand{\R}{\mathbb{R}}
\renewcommand{\d}{\/\mathrm{d}\/}
\newcommand{\Addresses}{{
		\footnote{

			\noindent \textsuperscript{1}School of Mathematics,
			Indian Institute of Science Education and Research, Thiruvananthapuram (IISER-TVM),
			Maruthamala PO, Vithura, Thiruvananthapuram, Kerala, 695 551, INDIA.  \par\nopagebreak \noindent
			\textit{e-mail:} \texttt{manikabag19@iisertvm.ac.in}

			\noindent \textsuperscript{2}Department of Mathematical Analysis, Faculty of Mathematics and Physics, Charles University, Sokolovsk\'a 83, 18675 Praha 8, Czech Republic \par\nopagebreak
			\noindent  \textit{e-mail:} \texttt{tanibsw91@gmail.com}
			
			\noindent \textsuperscript{3}School of Mathematics, Indian Institute of Science Education and Research, Thiruvananthapuram (IISER-TVM),
			Maruthamala PO, Vithura, Thiruvananthapuram, Kerala, 695 551, INDIA  \par\nopagebreak \noindent
			\textit{e-mail:} \texttt{sheetal@iisertvm.ac.in}

			\noindent \textsuperscript{*}Corresponding author.

			\medskip\noindent
			{\bf Acknowledgments:}  Manika Bag would like to thank the  Indian Institute of Science Education and Research, Thiruvananthapuram, for providing financial support and stimulating environment for the research. The work of Tania Biswas was supported by the National Research Foundation of Korea(NRF) grant funded by the Korean government(MSIT)(grant No. 2020R1A4A1018190, 2022R1A4A1032094). The work of Sheetal Dharmatti is partially supported by the Government of India SERB  research grant No.  CRG/2021/008278, and the financial assistance is duly acknowledged. 
			
}}}
\begin{document}

	\title[ Cahn-Hilliard-Navier-Stokes Equations with Nonhomogeneous Boundary ]{On the Cahn-Hilliard-Navier-Stokes Equations with Nonhomogeneous Boundary \Addresses	}

	\author[Manika Bag, T. Biswas and S. Dharmatti]
	{Manika Bag\textsuperscript{1}, Tania Biswas\textsuperscript{2} and Sheetal Dharmatti\textsuperscript{3*}}


	\begin{abstract}
		 The evolution of two isothermal, incompressible, immiscible fluids in a bounded domain is governed by Cahn-Hilliard-Navier-Stokes (CHNS) equations. In this work, we study the well-posedness results for the CHNS system with nonhomogeneous boundary condition for the velocity equation. We obtain the existence of global weak solutions in the two-dimensional bounded domain.  We further prove the continuous dependence of the solution on initial conditions and boundary data that will provide the uniqueness of the weak solution. The existence of strong solutions is also established in this work.
	\end{abstract}

 \maketitle
	
	\keywords{\textit{Key words:} Diffuse interface model, Cahn–Hilliard–Navier–Stokes system, Nonhomogeneous boundary conditions.}
	
	Mathematics Subject Classification (2020): 35A01, 35B65, 35Q35, 76D03, 76T06.

\section{Introduction}
We consider the motion of an isothermal mixture of two immiscible and incompressible fluids subject to phase separation, which is described by the well-known diffuse interface model. It consists of the Navier-Stokes equations for the average velocity and a convective Cahn-Hilliard equation for the relative concentration, also known as Cahn-Hilliard-Navier-Stokes (CHNS) system or ``model H". A general model for such a system is given by:
\begin{equation}\label{gen}
\left\{
\begin{aligned}
  \partial_t\varphi + \mathbf{u} \cdot \nabla \varphi &= \text{div}(m(\varphi)\nabla\mu), \, \, \text{ in } \Omega \times (0,T), \\
        \mu &= -\Delta\varphi + F'(\varphi), \\
        \partial_t\mathbf{u} - \text{div}(\nu(\varphi) \mathrm{D}\mathbf{u}) + (\mathbf{u}\cdot \nabla)\mathbf{u} + \nabla \pi &= \mu \nabla \varphi, \, \, \text{ in } \Omega \times (0,T), \\
        \text{div}~\mathbf{u} & = 0, \, \, \text{ in } \Omega \times (0,T), \\
\end{aligned}   
\right.
\end{equation}
where $\u(\x, t)$ is the average velocity of the fluid and $\varphi(\x, t)$ is the relative concentration of the fluid. Here, $\Omega$ is a bounded domain in $\mathbb{R}^2, $ with a sufficiently smooth boundary $\Gamma$. The density is taken as matched density, i.e., constant density, which is equal to 1. Moreover, $m$ is mobility of binary mixture, $\mu$ is a chemical potential, $\pi$ is the pressure, $\nu$ is the viscosity and $F$ is a double well potential. The symmetric part of the gradient of the flow velocity vector is denoted by $\mathrm{D}\u$,  that is, $\mathrm{D}\u$ is the strain tensor $\frac{1}{2}\left(\nabla\u+(\nabla\u)^{\top}\right)$. Furthermore, $\mu$   is the first variation of the Helmholtz free energy functional
\begin{align}
\mathcal{E}_1(\varphi) := \int_{\Omega} \left( \frac{1}{2} | \nabla \varphi|^2 + \F(\varphi (x))\right)\, \d x,
\end{align}
where $\F$ is a double-well potential of the regular type. A typical example of regular $\F$ is
\begin{align}\label{regular}
    \F(s)=(s^2-1)^2, \ s \in \mathbb{R}.
\end{align}
A physically relevant but singular potential $\F$ is the Flory-Huggins potential given by
\begin{align*}
\F (\varphi) = \frac{\theta}{2} ((1+ \varphi) \ln (1+\varphi) + (1-\varphi) \ln (1-\varphi)) - \frac{\theta_c}{2} \varphi^2 ,\quad \varphi \in (-1,1),
\end{align*} 
where $\theta,\theta_c>0$.

The model H was derived in \cite{GPV, hh, star} for matched densities.  Whereas, for binary fluids with different densities, more generalized diffuse interface models were proposed in the literature (see, for instance, \cite{abels1, af, agg, boyer1, dss, TL}). There are considerable amount of works devoted to the mathematical analysis of model \eqref{gen} subject to boundary condition 
\begin{align}\label{hbdry}
\frac{\partial\varphi}{\partial\n} = 0, \quad  \frac{\partial\mu}{\partial\mathbf{n}} & = 0, \quad    \u = 0, \quad \text{ on } \Gamma\times(0,T).
\end{align}
 Notably, in \cite{boyer}, the case of $\Omega\subset\mathbb{R}^d$ being a periodical channel and $\F$ being a suitable smooth double-well potential was investigated, with further insights provided in \cite{cg}. A more comprehensive mathematical theory concerning the existence, uniqueness, and regularity of solutions for the system \eqref{gen} with \eqref{hbdry} was developed in \cite{abels}. Generalizations of the model \eqref{gen} have been discussed in \cite{abels1, ADG, ADT}. While \cite{abels} primarily focused on the case of singular potential, all these results hold for regular potentials as well (see \cite{boyer, cg, GG}).
For numerical investigations of this model, references such as \cite{feng, ksw, ls} provide valuable insights. Moreover, a nonlocal version of the model \eqref{gen} was introduced in \cite{gl1, gl2}, wherein the chemical potential $\mu$ is replaced by $a \varphi - \J\ast \varphi + \F'(\varphi)$. Mathematical analysis of this nonlocal model has been conducted in \cite{weak, fri, fri1, strong}, among others. In these studies, the boundary conditions for the velocity field $\u$ have typically been assumed as a no-slip or periodic, while the boundary conditions for the phase field variable $\varphi$ and the chemical potential $\mu$ are often considered as no-flux. The long-time behavior of the system \eqref{gen} with boundary conditions \eqref{hbdry} has been investigated in \cite{abels}, while the existence of global attractors and exponential attractors was explored in \cite{GG}. 

The boundary conditions taken into consideration so far for the CHNS system are rather standard. More precisely, in almost all the contributions, velocity $\u$ is subject to no-slip or periodic boundary conditions, while $\varphi$ and $\mu$ are subject to homogeneous Neumann boundary conditions or periodic boundary conditions in the shear case. However, the incompressible Navier-Stokes equations that describe the motion of a single-phase fluid are well-studied in the literature with homogeneous as well as nonhomogeneous boundary data. We refer to \cite{farwig, fursi, fursi1, raymond} for the treatment of Navier Stokes' equations with nonhomogeneous boundary conditions and references therein.  For Cahn-Hilliard equations, dynamic boundary conditions, which are physically relevant, are analyzed in \cite{ cgs1, gk, mz, cw}. Thus, it would be natural to consider a nonhomogeneous boundary condition for a coupled CHNS system under consideration. For the coupled CHNS system, a dynamic boundary condition for the Cahn-Hilliard equations has been considered in \cite{cher, gk, you}, while \cite{cher} has considered the case of a mixture of compressible fluids. Another more generalized boundary condition (GNBC), which accounts for a moving contact line, is studied in \cite{GGM, GGW, ggp} which consists of the Navier boundary condition for the velocity equations $\u$, a no-flux boundary condition for $\mu$ and a dynamic boundary condition for $\varphi$.


To our knowledge, the well-posedness of the model \eqref{gen} with a nonhomogeneous boundary condition for velocity has not been studied analytically in the literature. If one wants to study a boundary control problem for model H,  well-posedness results are necessary.  However, few numerical results are present in this direction; we refer \cite{ghk, hunt, hunt1}. In these papers, the authors have proved a boundary optimal control problem for a time-discrete CHNS system with nonhomogeneous boundary conditions for the velocity. 

In this article, we want to extend the work of \cite{abels, boyer} in the non-autonomous case by taking a time-dependent boundary condition for velocity. In particular, we consider the system \eqref{gen} with constant mobility, set equal to 1, non-constant viscosity, and nonhomogeneous boundary condition, which is given by
\begin{equation}\label{equ P}
\left\{
\begin{aligned}
 \partial_t\varphi + \mathbf{u} \cdot \nabla \varphi &= \Delta \mu, \, \, \text{ in } \Omega \times (0,T), \\
       \mu &= -\Delta\varphi + F'(\varphi), \\
        \mathbf{u}_t - \text{div}(\nu(\varphi) \mathrm{D}\mathbf{u}) + (\mathbf{u}\cdot \nabla)\mathbf{u} + \nabla \pi &= \mu \nabla \varphi, \, \, \text{ in } \Omega \times (0,T), \\
       \text{div}~\mathbf{u} & = 0, \, \, \text{ in } \Omega \times (0,T), \\
        \frac{\partial\varphi}{\partial\n} = 0, \,  \frac{\partial\mu}{\partial\mathbf{n}} & = 0, \,\, \text{ on } \Gamma\times(0,T), \\
        \u & = \h, \,\, \text{ on } \Gamma\times(0,T), \\
       \mathbf{u}(0) = \mathbf{u}_0 ,\,\, \varphi(0) & = \varphi_0, \,\, \text{ in } \Omega,  
\end{aligned}   
\right.
\end{equation}
where $\h$ is the external force acting on the boundary of the domain.

As a contribution from this work, we have established the existence of weak and strong solutions and the uniqueness of weak solutions for the system  \eqref{equ P}.
Before concluding this section, we want to highlight some key aspects of the paper. The system \eqref{equ P} is inherently non-autonomous owing to the presence of the time-dependent boundary condition $\h(t)$. This introduces some difficulty such as 
 the intricate nonlinear coupling prohibits the direct application of fixed-point arguments. In the context of single-phase fluid flow, such as Navier-Stokes equations with non-homogeneous boundaries, the use of a lifting operator, which maps the boundary data to the domain, has been instrumental in proving the existence of solutions, as demonstrated in \cite{farwig, fursi1, raymond}. Using similar techniques,  we introduce suitable lifting functions (cf. \eqref{equ1} below) for the Navier-Stokes equations. Subsequently, using the lifting defined in \eqref{equ1}, we rewrite the velocity equations with homogeneous boundary conditions and discretize it along with the variable $\varphi$. Then, we study the existence results for the approximate (discretized) problem employing Schauder's fixed-point theorem. Finally, by deriving the necessary estimates, we proceed to the limit of the approximate problem, thereby establishing the existence of a global weak solution. The uniqueness of weak solutions to the CHNS system with unmatched viscosities has previously been established in \cite{GMT}. However, in our setting, the velocity field is subject to nonhomogeneous boundary conditions, which prevents direct application of the techniques developed in \cite{GMT}. Specifically, the difference in concentrations, denoted by $\delta\varphi = \varphi_1 - \varphi_2$ (see Section 4), does not have zero mean due to the influence of boundary data. To address this challenge, we introduce a modification of the strategy proposed in \cite{GMT} to accommodate the non-homogeneous boundary conditions and successfully establish the uniqueness of weak solutions in our framework. Subsequently, we employ a standard regularity argument to derive the existence of strong solutions to the system \eqref{equ P}.

The structure of the paper is as follows:
In Section 2, we present the requisite functional framework to establish the well-posedness results.
Section 3 is dedicated to the establishment of the energy inequality and the proof of existence results utilizing a semi-Galerkin approximation.
Section 4 focuses on demonstrating the continuous dependence of weak solutions on both initial data and boundary terms, thereby establishing the uniqueness of the weak solution.
Moving forward to Section 5, we establish the existence of a strong solution.
\section{Preliminaries}
\subsection{Functional Setup}
Let $\Omega$ be a bounded subset of $\mathbb{R}^2$ with sufficiently smooth boundary $\Gamma$. We introduce the functional spaces that will be useful in the paper. 
\begin{align*}
&\G_{\text{div}} := \Big\{ \u \in \mathrm{L}^2(\Omega;\mathbb{R}^2) : \text{ div }\u=0,\  \u|_{\Gamma}\cdot \mathbf{n}=0 \Big\}, \\
&\V_{\text{div}} :=\Big\{\u \in \mathrm{H}^1_0(\Omega;\mathbb{R}^2): \text{ div }\u=0 \text{ in }\Omega, {\mathrm{H}^{-\frac{1}{2}}(\Gamma)}\langle \u\cdot \mathbf{n}, 1  \rangle_{\mathrm{H}^\frac{1}{2}(\Gamma)}  = \; 0 \Big\},\\ 
&\H^s_{\text{div}} :=\Big\{\u \in \mathrm{H}^s(\Omega;\mathbb{R}^2): \text{ div }\u=0\Big\}, \quad s\geq 0,\\ 
&\V^s(\Omega):= \big\{ \u\in\mathrm{H}^s(\Omega;\mathbb{R}^2): \text{ div }\u=0 \text{ in }\Omega, \; \;  \langle \u\cdot \mathbf{n}, 1  \rangle_{\mathrm{H}^\frac{1}{2}(\Gamma)}  = \; 0 \big\}, \quad s\geq 0,\\
&\mathrm{L}^2:=\mathrm{L}^2(\Omega;\mathbb{R}),\quad 
\mathrm{H}^s:=\mathrm{H}^s(\Omega;\mathbb{R}), \quad s > 0.
\end{align*}
In addition, we define boundary spaces $\mathbb{H}^s(\Gamma; \R^2)$ in the usual trace sense and
\begin{align*}
  \V^s(\Gamma) := \Big\{\h\in \mathrm{H}^s(\Gamma; \R^2) : \int_{\Gamma}\h\cdot\n = 0 \Big\}, \quad s\geq 0. 
\end{align*}
 The dual space of $\mathrm{H}^s(\Omega), \V^s(\Gamma)$ is denoted by $\mathrm{H}^{-s}(\Omega), \V^{-s}(\Gamma)$, respectively.  Let us denote $\| \cdot \|$ and $(\cdot, \cdot)$ the norm and the scalar product, respectively, on  $\V^0(\Omega)$ and $\G_{\text{div}}$. The duality between any Hilbert space $\X$ and its dual $\X'$ will be denoted by $\left<\cdot,\cdot\right>$. We endowed $\V_{\text{div}}$ with the scalar product 
$$(\u,\v)_{\V_{\text{div}} }= (\nabla \u, \nabla \v)=2(\mathrm{D}\u,\mathrm{D}\v),\ \text{ for all }\ \u,\v\in\V_{\text{div}}.$$ The norm on $\V_{\text{div}}$ is given by $\|\u\|_{\V_{\text{div}}}^2:=\int_{\Omega}|\nabla\u(x)|^2\d x=\|\nabla\u\|^2$. Since $\Omega$ is bounded, the embedding of $\mathbb{V}_{\text{div}}\subset\G_{\text{div}}\equiv\G_{\text{div}}'\subset\V_{\text{div}}'$ is compact.

\subsection{\bf Linear and Nonlinear Operators}
Let us define the Stokes operator $\A : \V_{\text{div}} \to \V'_{\text{div}}$ such that 
\begin{equation*}
\langle\A\u, \v\rangle = (\u, \v)_{\V_\text{div}} = (\nabla\u, \nabla\v),  \text{ for all } \u , \v \in \V_{\text{div}}.
\end{equation*}
$\A$ is a canonical isomorphism from $\V_{\text{div}}$ to $\V'_{\text{div}}$. We denote $\A^{-1}:\V'_{\text{div}}\to\V_{\text{div}}$ inverse map of Stokes operator. Then following \cite[Appendix B]{GMT}, 
\begin{align}\label{norm}
   \|\g\|_{\sharp}=\|\nabla\A^{-1}\g\|=\langle\g,\A^{-1}\g\rangle^\frac{1}{2} 
\end{align} 
is an equivalent norm on $\V'_{\text{div}}.$ It should also be noted that  $\A^{-1} : \G_{\text{div}} \to \G_{\text{div}}$ is a self-adjoint compact operator on $\G_{\text{div}}$ and by
the classical \emph{spectral theorem}, there exists a sequence $\lambda_j$ with $0<\lambda_1\leq \lambda_2\leq \lambda_j\leq\cdots\to+\infty$
and a family  $\mathbf{e}_j \in \D(\A)$ of eigenvectors is orthonormal in $\G_\text{div}$ and is such that $\A\mathbf{e}_j =\lambda_j\mathbf{e}_j$. We know that $\u$ can be expressed as $\u=\sum\limits_{j=1}^{\infty}\langle \u,\mathbf{e}_j\rangle \mathbf{e}_j,$ so that $\A\u=\sum\limits_{j=1}^{\infty}\lambda_j\langle \u,\mathbf{e}_j\rangle \mathbf{e}_j$. Thus, it is immediate that 
\begin{equation}
\|\nabla\u\|^2=\langle \A\u,\u\rangle =\sum_{j=1}^{\infty}\lambda_j|\langle \u,\mathbf{e}_j\rangle|^2\geq \lambda_1\sum_{j=1}^{\infty}|\langle \u,\mathbf{e}_j\rangle|^2=\lambda_1\|\u\|^2,
\end{equation} 
which is the \emph{Poincar\'e inequality}.

For later use, we denote $(x\otimes y)_{ij}=x_iy_j, i,j=1,2$, then we can write $$(\mathbf{x}\cdot\nabla)\mathbf{y}=\text{div}(\mathbf{x}\otimes\mathbf{y}).$$

Using the nonlinearity present in the Navier-Stokes equation,
we define some bilinear operators $\B, \B_1, \B_2 \text{ and } \B_3$  as follows:\\
First, let us denote by
$$b(\u,\v,\w) = \int_\Omega (\u(x) \cdot \nabla)\v(x) \cdot \w(x)\d x=\sum_{i,j=1}^2\int_{\Omega}u_i(x)\frac{\partial v_j(x)}{\partial x_i}w_j(x)\d x.$$
Then an integration by parts yields,
\begin{equation*}
\left\{
\begin{aligned}
b(\u,\v,\v) &= 0, \ \text{ for all } \ \u,\v \in\V_\text{{div}},\\
b(\u,\v,\w) &=  -b(\u,\w,\v), \ \text{ for all } \ \u,\v,\w\in \V_\text{{div}}.
\end{aligned}
\right.\end{equation*}
 We define $\B$ : $\V_{\text{div}} \times \V_{\text{div}} $ $\rightarrow$  $\V_{\text{div}}'$ defined by,
$$ \langle \B(\u,\v),\w  \rangle := b(\u,\v,\w), \  \text{ for all } \ \u,\v,\w \in \V_\text{{div}}.$$
$\B_1$ from $\V_{\text{div}} \times \V^1(\Omega)$ into $\V_{\text{div}}'$ defined by,
$$ \langle \B_1(\u,\v),\w  \rangle := b(\u,\v,\w), \  \text{ for all } \u \in \V_\text{{div}} \, \v, \w \in \V^1(\Omega). $$
Similarly we define $\B_2 : \V^1(\Omega)\times \V_{\text{div}} \rightarrow \V_{\text{div}}'$ and $\B_3 : \V^1(\Omega) \times \V^1(\Omega) \rightarrow \V^1(\Omega)' $ defined by
$$ \langle \B_2(\u,\v),\w  \rangle  := b(\u,\v,\w), \  \text{ for all } \u \in \V^1(\Omega) \text{ and }\w,  \v \in \V_\text{{div}}, $$
$$\text{and} \;   \langle \B_3(\u,\v),\w  \rangle := b(\u,\v,\w), \  \text{ for all } \u, \v, \w \in \V^1(\Omega) \; \; \;\text{respectively}. $$

For more details about  linear and nonlinear operators, we refer the readers to \cite{temam}. 

 We recall the following result from \cite[Proposition C.1]{GMT}, which will be utilized later in our calculations.
	\begin{proposition}
	Let $\Omega$ be a bounded domain with a smooth boundary in $\mathbb{R}^2$. Assume that $f, g \in  \mathrm{H}^1$. Then, there exists a positive constant $C$ such that
	\begin{eqnarray}\label{prop2.1}
\|fg\| \leq C \|f\|_{\mathrm{H}^1} \|g\| [\log (e^{\frac{\|g\|_{\mathrm{H}^1}}{\|g\|}})]^{\frac{1}{2}}	 
	\end{eqnarray}
	\end{proposition}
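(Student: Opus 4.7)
The proposition is a Brezis--Gallouet/Trudinger--type logarithmic product estimate in two dimensions; the literal expression $[\log(e^{\|g\|_V/\|g\|})]^{1/2}=(\|g\|_V/\|g\|)^{1/2}$ suggests the intended content is a quantified refinement of the 2D Ladyzhenskaya inequality (in the form typically appearing in the paper of Gerbeau--Le Bris or similar, cited here as GMT). The plan is to combine Hölder's inequality with the sharp 2D Sobolev embedding $H^1 \hookrightarrow L^p$ for all finite $p$, and then optimize over $p$ to extract the $\log$ factor (or, if the trivial reading is used, reduce to the standard Ladyzhenskaya estimate $\|w\|_{L^4}^2 \leq C\|w\|\,\|w\|_V$).

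First I would apply Hölder's inequality in the form
\begin{equation*}
\|fg\|_{L^2} \leq \|f\|_{L^{2p/(p-2)}}\,\|g\|_{L^p},\qquad p>2,
\end{equation*}
so that $f$ absorbs the high-integrability factor and $g$ is kept close to $L^2$. Next, I would invoke the sharp 2D embedding $\|w\|_{L^q(\Omega)} \leq C\sqrt{q}\,\|w\|_{V}$ valid for every $q\in[2,\infty)$ on the smooth bounded domain $\Omega$, which controls $\|f\|_{L^{2p/(p-2)}}$ uniformly in $p$ as $p\to 2^+$ (where the exponent $2p/(p-2)$ blows up), giving
\begin{equation*}
\|f\|_{L^{2p/(p-2)}} \leq C\,\Bigl(\tfrac{2p}{p-2}\Bigr)^{1/2}\|f\|_V.
\end{equation*}
Simultaneously, I would interpolate $\|g\|_{L^p}$ between $L^2$ and some higher $L^r$ (or $V$ itself via $H^1\hookrightarrow L^r$ for large $r$), obtaining a bound of the form $\|g\|_{L^p}\leq \|g\|^{\theta}\|g\|_V^{1-\theta}$ with $\theta=\theta(p)\to 1$ as $p\to 2^+$.

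Finally, I would optimize the free parameter $p$ by choosing $p-2 \sim (\log(\|g\|_V/\|g\|))^{-1}$, which balances the factor $(2p/(p-2))^{1/2}$ (growing like the square root of that logarithm) against the deficit $(1-\theta)$ in the interpolation exponent. This optimization is exactly the step that produces the $[\log(\cdot)]^{1/2}$ factor in the conclusion. The main obstacle will be handling the boundary behaviour when $\|g\|_V$ is comparable to $\|g\|$ (small data regime), where one needs to include an additive $e$ or restrict to the meaningful range to avoid spurious singularities; under the literal reading of the statement this case is automatic since $\|g\|_V \geq \|g\|$ (up to Poincaré) and the exponential cancels the logarithm, so the inequality degenerates to the classical Ladyzhenskaya bound, which can instead be recovered immediately from $\|f\|_{L^4},\|g\|_{L^4}\leq C\|\cdot\|^{1/2}\|\cdot\|_V^{1/2}$.
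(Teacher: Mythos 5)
The paper never proves this proposition: it is quoted verbatim (with a typo) as Proposition C.1 of \cite{GMT}, so there is no in-paper argument to compare against. Your proof is nonetheless essentially correct and is the standard Brezis--Gallou\"et-type derivation of this logarithmic interpolation inequality; what it buys over the paper's bare citation is an actual self-contained argument. You are also right to flag the typo: as printed, $[\log(e^{\|g\|_V/\|g\|})]^{1/2}=(\|g\|_V/\|g\|)^{1/2}$ and the claim collapses to the Ladyzhenskaya bound $\|fg\|\le\|f\|_{L^4}\|g\|_{L^4}\le C\|f\|_V\|g\|^{1/2}\|g\|_V^{1/2}$; the intended statement (the one actually in \cite{GMT}) has $\log\bigl(e\,\|g\|_V/\|g\|\bigr)$. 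For that intended statement your scheme closes: setting $\epsilon=(p-2)/p$, H\"older together with $\|f\|_{L^{2/\epsilon}}\le C\epsilon^{-1/2}\|f\|_V$ and $\|g\|_{L^p}\le C\|g\|^{1-\epsilon}\|g\|_V^{\epsilon}$ gives
\begin{equation*}
\|fg\|\le C\,\epsilon^{-1/2}\,\|f\|_V\,\|g\|\left(\frac{\|g\|_V}{\|g\|}\right)^{\epsilon},
\end{equation*}
and the choice $\epsilon=[\log(e\|g\|_V/\|g\|)]^{-1}\in(0,1]$ (admissible precisely because $\|g\|_V\ge\|g\|$, which disposes of the small-ratio regime you worried about) makes the last factor at most $e$, yielding the claim. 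The only steps that deserve explicit justification in a written-out version are (i) the uniform-in-$q$ constant in $\|w\|_{L^q(\Omega)}\le C\sqrt{q}\,\|w\|_{H^1(\Omega)}$ for $q\in[2,\infty)$, obtained via an extension operator and the sharp two-dimensional Gagliardo--Nirenberg (or Trudinger--Moser) estimate, and (ii) the boundedness of the Gagliardo--Nirenberg constant in $\|g\|_{L^p}\le C\|g\|^{2/p}\|g\|_{H^1}^{1-2/p}$ as $p\to 2^{+}$; both are standard, so your argument is complete in substance.
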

In addition we recall the following lemma from \cite[Lemma 4.2]{temam1}:
\begin{lemma}\label{lem4.2}
    For any $\eta>0$ $$\{\|\Delta\varphi\|^2+\eta\|\varphi\|^2\}^\frac{1}{2}, \quad \text{and} \quad \Big\{\|\Delta\varphi\|^2+\eta\Big(\int_\Omega\varphi(x) dx\Big)^2\Big\}^\frac{1}{2}$$ are norms on $V$, which are equivalent to the $\mathrm{H}^2$ norm, where the space  $V$ is defined by $$V=\{\phi\in\mathrm{H}^2(\Omega):\frac{\partial\phi}{\partial n}=0 \text{ on }\Gamma\}.$$
\end{lemma}
	Let the viscosity $\nu$ and the double-well potential $F$ satisfy the following assumptions:
\begin{assumption}\label{prop of F} 
\begin{enumerate}
 \item[(A1)] We assume the viscosity coefficient $\nu \in W^{1,\infty}(\mathbb{R})$ and for some positive constants $\nu_1$ and $\nu_2$ satisfies $$0 < \nu_1 < \nu(r) < \nu_2.$$ 
 \item[(A2)] $F\in C^3(\mathbb{R}).$ 
\item [(A3)] There exist $C_1 >0$, $C_2 \geq 0$ and $3\leq q\leq 5 $ such that $|F'(s)| \leq C_1|s|^q + C_2,$ for all $s \in \mathbb{R}$. 
 \item [(A4)] There exist $C_3 >0$ such that $F''(s)\geq -C_3$, for all $s \in \mathbb{R}$, a.e., $x \in \Omega$.
	\item [(A5)] There exist $C_4 >0$, $C_4' > 0$  such that $|F''(s)| \leq C_4|s|^{q-1} + C_4'$, for all $s \in \mathbb{R}$, $3 \leq q \leq 5$ and a.e. $x \in \Omega$.
		\item[(A6)] There exist $C_5 > 0$, $|F'''(s)| \leq C_5(1 + |s|^{q-2})$ for all $s \in \mathbb{R} \text{ where } 3\leq q \leq 5$.
		\item[(A7)] $F(\varphi_0) \in \mathrm{L}^1(\Omega)$.
	\end{enumerate}
\end{assumption}
\begin{remark}\label{nu}
From (A1), we conclude that there exists a positive constant $\nu$ such that $\nu(\varphi)-\nu_1\geq\nu>0.$
\end{remark}
Now onward, we consider $C$ as a non-negative generic constant that may depend on initial data, $\nu$, $\nu_1, \, \nu_2$, $F$, $\Omega$, $C_i, \, i=1, \cdots 5$, and the norm of given boundary data $\h$. The value of $C$ may vary even within the proof.
\subsection{Elliptic Lifting Operator}
To ensure the well-posedness and strong solution of the considered system, we introduce a suitable lifting operator. This approach enables a rigorous framework for addressing the existence and uniqueness, which are discussed in detail in the next section. We describe the elliptic lifting operator below.\\
\begin{assumption}\label{H}
Let $\h$ satisfy the following conditions
\begin{equation*}
\left\{
    \begin{aligned}
    \h & \in \mathrm{L}^2(0, +\infty;\V^\frac{3}{2}(\Gamma)) \cap \mathrm{L}^\infty(0, +\infty; \V^\frac{1}{2}(\Gamma)), \\
 \partial_t \h & \in \mathrm{L}^2(0, +\infty; \V^{\frac{1}{2}}(\Gamma)).
 \end{aligned}
 \right.
 \end{equation*}
 \end{assumption}
\begin{theorem} \label{elliptic lifting}
For each $ t \in [0, \infty),$ consider  the stationary Stokes'  equation with non-homogeneous boundary conditions given by 
\begin{equation}\label{equ1}
\left\{
 \begin{aligned}
     -\nu_1 \Delta \u_e(t) + \nabla p(t) & = 0, \,\, \text{ in } \Omega , \\
    \text{div}~\u_e(t) & = 0, \, \, \text{ in } \Omega,\\
    \u_e(t) & = \h(t), \, \, \text{ on } \Gamma,
     \end{aligned}   
     \right.
\end{equation}
Then, under the Assumption \ref{H}, system \eqref{equ1} admits a unique  solution with the regularity 
 \begin{align}\label{e18}
  \u_e \in \mathrm{H}^1(0, +\infty; \mathbb{V}^0(\Omega)) \cap \mathrm{L}^\infty(0, +\infty;\V^1(\Omega)) \cap \mathrm{L}^2(0, T; \V^2(\Omega)),
\end{align}
such that
\begin{align}
    \int_{0}^{t} {\| \u_e(\tau) \|}^2_{\V^2(\Omega)} \, d\tau &\leq c \int_{0}^{t} {\| \h(\tau) \|}^2_{\V^{\frac{3}{2}}(\Gamma)} \, d\tau,\label{estimate h} \\
    \int_{0}^{t} {\| \partial_t\u_e(\tau) \|}^2 \, d\tau &\leq c \int_{0}^{t} \| \partial_t \h(\tau) \|_{\V^{\frac{1}{2}}(\Gamma)} \, d\tau.\label{estimate ht}
    \end{align}
    for all $t \geq 0$ and for some $c$,  a positive constant depending on $\Omega, \nu_1.$
\end{theorem}
\begin{proof}
The inequality \eqref{estimate h} can be proved using standard elliptic regularity theory.  For instance from \cite[Proposition 2.3]{temam} we have 
\begin{align}
    \|\u(t)\|_{\V^1(\Omega)}\leq \,  c \, \|\h(t)\|_{\V^\frac{1}{2}(\Gamma)}, \quad 
    \int_0^t\|\u(\tau)\|^2_{\V^2(\Omega)} \, d\tau\leq \,   c\int_0^t\|\h(\tau)\|^2_{\V^\frac{3}{2}(\Gamma)} \, d\tau,\label{1}
\end{align}
for all $t\geq 0$. Similarly, we differentiate the equation \eqref{equ1} with respect to $t$ and  apply the above inequality for $\partial_t\u_e$. Then, taking the essential supremum on the first inequality of \eqref{1}, we prove the theorem. For details, see \cite[Proposition 2.3]{temam}.
\end{proof}
We call a function $\u_e$ defined as above an elliptic lifting. Using the lifting function introduced above, system \eqref{equ P} can be rewritten
as a system with a homogeneous boundary, which will be used to obtain suitable a priori estimates for the solution via the energy method. We give detailed proof in the next section.

\section{Existence of  a weak solution}

Let us rewrite system \eqref{equ P} 
as a system with a homogeneous boundary using the lifting function defined in the previous section. For, 
set $\overline{\u}= \u-\u_e,$ where $\u_e$ is an elliptic lifting.
Then $(\overline{\u}, \varphi)$ solves the following system: 
\begin{equation}\label{7}
\left\{
 \begin{aligned}
  & \partial_t\varphi +\overline{\u}\cdot \nabla \varphi + \u_e \cdot \nabla \varphi = \Delta\mu, \, \, \text{ in } \Omega \times (0,T),\\
    &\mu = - \Delta\varphi + F'(\varphi),\text{ in } \Omega \times (0,T),\\
   &\partial_t\overline{\u} - \text{div}((\nu(\varphi)-\nu_1) \mathrm{D}\overline{\u}) + ((\overline{\u}+\u_e)\cdot \nabla)(\overline{\u}+\u_e) + \nabla\overline{\pi} = \mu \nabla \varphi - \partial_t\u_e, \, \, \text{ in } \Omega \times (0,T),  \\
    &\text{div}~\overline{\mathbf{u}}  = 0, \, \, \text{ in } \Omega \times (0,T), \\
 &\frac{\partial\varphi}{\partial\n} = 0, \,  \frac{\partial\mu}{\partial\mathbf{n}}  = 0, \,\, \text{ on } \Gamma\times(0,T), \\
  & \overline{\u}  = 0, \,\, \text{ on } \Gamma\times(0,T),  \\
   &     \overline{\u}(0) = \overline{\u}_{0}= \u_0-\u_e(0), \,\, \varphi(0)  = \varphi_0 \,\, \text{ in } \Omega .   
 \end{aligned}
 \right.
 \end{equation}
 
In this section, we prove the existence of a weak solution of \eqref{equ P} by examining the system \eqref{7}. Before going further, let us define the notion of a weak solution of \eqref{equ P}.
\begin{definition}\label{weak sol def}
 Let the function $\u_e$ solve \eqref{equ1}. A pair $(\u, \varphi)$ is said to be a weak solution of the system \eqref{equ P} if $\u = \overline{\u} + \u_e$, and $(\overline{\u},\varphi)$ obeys 
\begin{itemize}
    \item The following weak formulation:
\begin{align}\label{weak formulation u}
    \langle \partial_t\overline{\u}(t),\mathbf{v} \rangle + (\nu(\varphi)-\nu_1)(\nabla \overline{\u}(t),\nabla\mathbf{v}) +  \langle(\overline{\u}(t) + \u_e(t)) \cdot \nabla (\overline{\u}(t) + \u_e(t)),\mathbf{v}\rangle\no\\ = (\mu\nabla \varphi,\mathbf{v}) - \langle \partial_t\u_e,\mathbf{v} \rangle, \quad\text{ holds for } \mathbf{v}\in \V_{\text{div}}, \text{a.e. in } \Omega \times (0, T),
\end{align}
 \begin{align}\label{weak formulation phi}
    \langle \partial_t\varphi(t), \psi \rangle + \big((\overline{\u}(t) + \u_e(t)) \cdot\nabla \varphi(t), \psi \big) + (\nabla \mu(t
    ), \nabla\psi) = 0,
\end{align}
holds for $\psi \in \mathrm{H}^1, \text{ a.e. } t \in (0, T)$. 
\item The initial conditions are satisfied in the weak sense:
    \begin{align}\label{weak formulation initial cond}
    (\overline{\u}(0), \v)  \to (\overline{\u}_0, \v), \quad (\varphi(0), \psi) \to (\varphi_0, \psi) \text{ as } t \to 0, \quad \text{for all }\v\in\V_{\text{div}} \text{ and } \psi\in\mathrm{L}^2. 
\end{align}
\item $(\overline{\u},\varphi)$ satisfies:
    \begin{equation}\label{weak 2}
\left\{
\begin{aligned}
    \overline{\u} & \in \mathrm{L}^\infty(0, T; \G_{\text{div}}) \cap \mathrm{L}^2(0, T; \V_{\text{div}}) \\
   \partial_t \overline{\u} & \in \mathrm{L}^2(0, T; \V_{\text{div}}')\\
    \varphi & \in \mathrm{L}^\infty(0, T; \mathrm{H}^1) \cap \mathrm{L}^2(0, T; \mathrm{H}^2)\\
    \partial_t\varphi & \in \mathrm{L}^2(0, T; ({\mathrm{H}^1})')\\
    \mu & \in \mathrm{L}^2(0, T; \mathrm{H}^1).
\end{aligned}
\right.
\end{equation}
\end{itemize}
\end{definition} 
\begin{remark}
From \eqref{weak 2} it also follows that $\overline{\u} \in C([0,T]; \G_{\text{div}})$ and $\varphi \in C([0,T];\mathrm{L}^2)$.
\end{remark}
Let us first derive the energy estimate satisfied by the lifted system \eqref{7}.\\
\textbf{Energy Estimate:  }
  Let us consider the  energy functional corresponding to the lifted system $\eqref{7}$ given by,
 \begin{align}\label{1energy}
     \mathcal{E}(\overline{\u}(t), \varphi(t)) &:= \frac{1}{2} \|\overline{\u}(t)\|^2 +\frac{1}{2} \|\nabla\varphi(t) \| +\int_\Omega F(\varphi(t)).
\end{align}
Then we can derive the following basic energy inequality for the lifted system $\eqref{7}$:
\begin{lemma}\label{lem 3.2}
    Let $(\u, \varphi)$ be a weak solution of the system \eqref{7} with initial conditions $(\overline{\u}_0, \varphi_0)\in\G_{\text{div}}\times\mathrm{H}^1$. Also, let us assume $\h$ satisfies Assumption \ref{H}, $\nu$ satisfy (A1) and $\F(\varphi_0) \in \mathrm{L}^1(\Omega)$. Then, $(\overline{\u}, \varphi)$ satisfies the following energy estimate:
    \begin{align}\label{mes}
    \mathcal{E}(\overline{\u}(t), \varphi(t)) &\leq C\Big(\mathcal{E}(\u_0, \varphi_0) +\int_{0}^t  \|\partial_t \h(\tau)\|_{\V^{\frac{1}{2}}(\Gamma)}^2d\tau +   \int_{0}^{t}\|\h(\tau)\|_{\V^{\frac{3}{2}}(\Gamma)}^2 d\tau\Big)\no\\&\quad\times\exp\Big(\int_0^t\|\h(\tau)\|^2_{\V^\frac{1}{2}(\Gamma)} d\tau\Big).
\end{align}
\end{lemma}
\begin{proof}
Multiplying $\eqref{7}_1$  by $\mu$ and $\eqref{7}_3$  by $\overline{\u}$ respectively and integrating over $\Omega$  and further adding them together yields 
\begin{align}\label{11}
&  \frac{d}{dt}\Big(\frac{1}{2} \|\overline{\u}\|^2 + \frac{1}{2}\|\nabla\varphi \|^2 +\int_\Omega F(\varphi)\Big) + (\nu(\varphi)-\nu_1) \| \nabla \overline{\u} \|^2 + \|\nabla\mu\|^2 \no \\ & =- b(\overline{\u}, \u_e, \overline{\u}) - b(\u_e, \u_e, \overline{\u}) - \langle \partial_t\u_e, \overline{\u} \rangle + \int_\Omega(\u_e \cdot \nabla \mu)\varphi. 
\end{align}
We now estimate each term on the right-hand side of equation \eqref{11} using Young's inequality, Poincar\'e inequality, Agmon's inequality, and the Sobolev embedding. 
We have the following estimates:
\begin{itemize}
\item $|b(\overline{\u}, \u_e, \overline{\u})|   \leq \|\overline{\u}\|_{\mathbb{L}^4} \, \|\nabla\u_e\| \,  \|\overline{\u}\|_{\mathbb{L}^4}  
   \leq C \|\nabla\u_e\|^2 \|\overline{\u}\|^2 + \frac{\nu}{4} \|\nabla \overline{\u}\|^2$,\\
   \item $|b(\u_e, \u_e, \overline{\u})|   \leq \| \u_e \|_{\mathbb{L}^\infty} \, \| \nabla\u_e \| \, \|\overline{\u} \| 
    \leq \frac{2}{C}\|\nabla\u_e\|^2 \|\overline{\u}\|^2 + \frac{C}{2}\|\u_e\|_{\V^2(\Omega)}^2 $,\\
    \item $\langle \partial_t\u_e, \overline{\u} \rangle  
     \leq C \|\partial_t\u_e\|^2 + \frac{\nu}{4} \|\nabla \overline{\u}\|^2  $,\\
     \item  $\Big|\int_\Omega(\u_e \cdot \nabla \mu)\varphi\Big| 
     \leq \|\u_e\|_{\mathbb{L}^4} \|\nabla\mu\| \, \|\varphi\|_{\mathrm{L}^4} 
     \leq \frac{1}{2} \|\nabla\mu\|^2 +\frac{1}{2} \|\u_e\|_{\mathbb{V}^1(\Omega)}^2\|\varphi\|_{\mathrm{H}^1}^2$.
  \end{itemize}
Considering the above estimates and using Remark \ref{nu}, we get from \eqref{11} the following : 
\begin{align}\label{en3}
  &\frac{d}{dt}\big( \|\overline{\u}\|^2  +  \|\nabla\varphi \|^2 +2\int_\Omega F(\varphi)\big) + \nu \|\nabla \overline{\u}\|^2 +   \|\nabla\mu\|^2 \no \\ & \quad\leq  C(\|\partial_t \u_e\|^2  + \|\u_e\|_{\V^2(\Omega)}^2) + C  \|\u_e\|_{\V^1(\Omega)}^2\big (\|\overline{\u}\|^2 + \|\nabla\varphi \|^2 +2\int_\Omega F(\varphi)\big).  
\end{align}
Now, integrating \eqref{en3} from $0$ to $t$ and using \eqref{estimate h} and \eqref{estimate ht} we get
\begin{align}\label{en4}
   & \mathcal{E}(\u(t), \varphi(t)) \leq \mathcal{E}(\u(0), \varphi(0)) +C\Big(\int_{0}^t  \|\partial_t \h(\tau)\|_{\V^{\frac{1}{2}}(\Gamma)}^2d\tau +   \int_{0}^{t}\|\h(\tau)\|_{\V^{\frac{3}{2}}(\Gamma)}^2 d\tau\Big)   \no\\& \quad +C\int_{0}^{t} \|\u_e(\tau)\|_{\V^1(\Omega)}^2 \mathcal{ E}(\u(\tau), \varphi(\tau)) d\tau.  
\end{align}
Then applying Gronwall's inequality in \eqref{en4} we obtain
\begin{align*}
   \mathcal{E}(t) :=  \mathcal{E}(\u(t), \varphi(t)) \leq &C\Big(\mathcal{E}(\u(0), \varphi(0)) +\int_{0}^t  \|\partial_t \h(\tau)\|_{\V^{\frac{1}{2}}(\Gamma)}^2d\tau +   \int_{0}^{t}\|\h(\tau)\|_{\V^{\frac{3}{2}}(\Gamma)}^2 d\tau\Big)\no\\&\quad \times\exp\Big(\int_0^t\|\h(\tau)\|^2_{\V^\frac{1}{2}(\Gamma)} d\tau\Big).
\end{align*}
\end{proof}
Now we state the existence result for the system \eqref{7}.
\begin{theorem}\label{thm4.2}
Let $\overline{\u}_0 \in \G_{\text{div}}$ and $\varphi_0 \in \mathrm{H}^1$. Let $\nu$ and $F$ satisfies (A1), (A2), (A4), (A7), and $\h$ satisfy Assumption \ref{H}. Moreover, let us assume $\u_0$ and $\h$ satisfies the compatibility condition
\begin{align}\label{compatibility}
   \u_0\big|_{\Gamma} = \h\big|_{t = 0}.
\end{align}
        Then, for any $T>0$, there exists a weak solution $(\overline{\u}, \varphi)$ to the system \eqref{7} in the sense of Definition \ref{weak sol def}.  
\end{theorem}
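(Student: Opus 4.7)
The plan is to use a semi-Galerkin approximation for the lifted system \eqref{7}, in which only the velocity is discretized while the Cahn-Hilliard component is kept continuous. Let $\{\mathbf{e}_j\}_{j\ge 1}$ be the eigenbasis of the Stokes operator $\A$, and let $P_n$ denote the orthogonal projection of $\G_{\text{div}}$ onto $\mathrm{span}\{\mathbf{e}_1,\dots,\mathbf{e}_n\}$. I will look for $\overline{\u}_n(t)=\sum_{j=1}^{n}c_j^n(t)\mathbf{e}_j$ satisfying the $n$-dimensional projection of the momentum equation in \eqref{7}, coupled with $\varphi_n$ solving the Cahn-Hilliard part with velocity field $\overline{\u}_n+\u_e$ and initial datum $\varphi_0\in\mathrm{H}^1$.

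To close the coupled nonlinear approximate system I would invoke Schauder's fixed point theorem, following the strategy already hinted at in the introduction. Fix a (small) time $T_\ast>0$ and consider the map $\Phi_n:\mathcal{K}\to\mathcal{K}$ on a closed convex subset $\mathcal{K}\subset C([0,T_\ast];P_n\G_{\text{div}})$ that sends $\widetilde{\u}_n\mapsto\overline{\u}_n$ by: first solving the (linear in $\varphi$) Cahn-Hilliard equation $\partial_t\varphi+(\widetilde{\u}_n+\u_e)\cdot\nabla\varphi=\Delta\mu$, $\mu=-\Delta\varphi+F'(\varphi)$ with Neumann conditions; then inserting $\varphi$ and $\mu$ into the projected momentum equation to solve a finite-dimensional ODE for the coefficients $c_j^n$. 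Continuity of $\Phi_n$ in the $C([0,T_\ast];P_n\G_{\text{div}})$-topology would follow from standard parabolic estimates for the Cahn-Hilliard equation together with Assumption \ref{prop of F}, while compactness in $t$ comes from uniform bounds on $\partial_t c_j^n$; the invariance of a ball in $\mathcal{K}$ is obtained from the a priori bound in Lemma \ref{lem 3.2} applied at the discrete level. Once a fixed point $(\overline{\u}_n,\varphi_n)$ is produced on $[0,T_\ast]$, the a priori estimate in Lemma \ref{lem 3.2}, combined with the regularity of $\h$, prevents blow-up and allows extension to the full interval $[0,T]$.

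Next I would establish uniform estimates for $(\overline{\u}_n,\varphi_n,\mu_n)$ independent of $n$. Testing the momentum equation against $\overline{\u}_n$ and the Cahn-Hilliard equation against $\mu_n$ exactly mimics the derivation of Lemma \ref{lem 3.2} (with the bounds \eqref{estimate h}-\eqref{estimate ht} on $\u_e$ supplying the boundary contributions), yielding uniform bounds of $\overline{\u}_n$ in $\mathrm{L}^\infty(0,T;\G_{\text{div}})\cap \mathrm{L}^2(0,T;\V_{\text{div}})$, of $\varphi_n$ in $\mathrm{L}^\infty(0,T;\mathrm{H}^1)$, and of $\nabla\mu_n$ in $\mathrm{L}^2(0,T;\mathrm{L}^2)$. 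The bound on $\int_\Omega F(\varphi_n)$ together with the growth assumption (A3) gives control of the mean of $\mu_n$ (via $F'(\varphi_n)$), hence a bound of $\mu_n$ in $\mathrm{L}^2(0,T;\mathrm{H}^1)$, which by elliptic regularity for $-\Delta\varphi_n+F'(\varphi_n)=\mu_n$ promotes $\varphi_n$ to $\mathrm{L}^2(0,T;\mathrm{H}^2)$. Finally, testing the weak formulations against admissible test functions and using H\"older and Ladyzhenskaya-type inequalities yields bounds on $\partial_t\overline{\u}_n$ in $\mathrm{L}^2(0,T;\V_{\text{div}}')$ and on $\partial_t\varphi_n$ in $\mathrm{L}^2(0,T;(\mathrm{H}^1)')$.

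With these bounds, the Aubin-Lions-Simon compactness lemma provides a (not relabeled) subsequence such that $\overline{\u}_n\to\overline{\u}$ strongly in $\mathrm{L}^2(0,T;\G_{\text{div}})$, $\varphi_n\to\varphi$ strongly in $\mathrm{L}^2(0,T;\mathrm{H}^1)$, and weak/weak-$\ast$ convergence in the spaces listed in \eqref{weak 2}; the strong convergences plus dominated convergence (using (A2)--(A3) and the $\mathrm{L}^2(0,T;\mathrm{H}^2)$-bound on $\varphi_n$) are exactly what is needed to pass to the limit in the nonlinear terms $b(\overline{\u}_n,\overline{\u}_n,\cdot)$, $b(\u_e,\overline{\u}_n,\cdot)$, $(\nu(\varphi_n)\mathrm{D}\overline{\u}_n,\nabla\cdot)$, $(\mu_n\nabla\varphi_n,\cdot)$, and $F'(\varphi_n)$. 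The hardest step will be the coupled nonlinear terms, in particular $(\nu(\varphi_n)-\nu_1)\mathrm{D}\overline{\u}_n$ and $\mu_n\nabla\varphi_n$: the former is handled by using $\nu\in W^{1,\infty}$ together with a.e.\ convergence of $\varphi_n$ and the weak convergence of $\mathrm{D}\overline{\u}_n$, while the latter requires rewriting it as $\nabla(F(\varphi_n)+\tfrac12|\nabla\varphi_n|^2)-\mathrm{div}(\nabla\varphi_n\otimes\nabla\varphi_n)$ so as to exploit strong $\mathrm{H}^1$-convergence of $\varphi_n$ together with the divergence-free test functions. Verification of the initial conditions in the sense \eqref{weak formulation initial cond} follows from $\overline{\u}\in C([0,T];\G_{\text{div}})$ and $\varphi\in C([0,T];\mathrm{L}^2)$ after passing to the limit, and the compatibility condition \eqref{compatibility} ensures $\overline{\u}_0=\u_0-\u_e(0)\in\G_{\text{div}}$, so the initial datum for the lifted system is well-defined.
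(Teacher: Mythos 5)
Your plan is essentially a mirror image of the paper's semi-Galerkin scheme, and it is a viable route, but it is genuinely different in which equation gets discretized. The paper applies the Faedo--Galerkin truncation to the \emph{concentration} equation only: it fixes $\overline{\varphi}^n,\overline{\mu}^n$ in the finite-dimensional space $\Psi_n$ spanned by eigenfunctions of the Neumann operator $B_N=-\Delta+I$, solves the \emph{full} (infinite-dimensional) momentum equation for $\overline{\u}^n$ by invoking an abstract existence theorem (Theorem 1.6 of \cite{barbu}), then solves the projected Cahn--Hilliard system, which is a finite-dimensional ODE system handled by Cauchy--Peano, and closes the loop with Schauder using the compact embedding of $\mathrm{H}^1(0,T_n;\Psi_n)$ into $C([0,T_n];\Psi_n)$. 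You instead truncate the \emph{velocity} in the Stokes eigenbasis and keep the Cahn--Hilliard part continuous, so your finite-dimensional ODE step is the momentum equation and your infinite-dimensional subproblem is the convective Cahn--Hilliard equation with a prescribed (smooth, finite-dimensional) velocity. What the paper's choice buys is that the only genuinely nonlinear-in-the-unknown subproblem left at the continuous level is a 2D Navier--Stokes-type equation with given right-hand side and given variable viscosity, for which a citable well-posedness theorem exists; what your choice buys is that the velocity compactness needed for Schauder is elementary (Arzel\`a--Ascoli in a finite-dimensional space via bounds on $\dot c^n_j$). The a priori estimates, the passage to the limit, and the treatment of the capillary term $\mu_n\nabla\varphi_n$ (your rewriting as $\nabla\bigl(F(\varphi_n)+\tfrac12|\nabla\varphi_n|^2\bigr)-\mathrm{div}(\nabla\varphi_n\otimes\nabla\varphi_n)$ against divergence-free test functions, versus the paper's direct product-of-weak-and-strong-limits argument) are interchangeable.

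Two caveats you should repair before this becomes a proof. First, your parenthetical ``(linear in $\varphi$)'' is wrong: the subproblem $\partial_t\varphi+(\widetilde{\u}_n+\u_e)\cdot\nabla\varphi=\Delta(-\Delta\varphi+F'(\varphi))$ is nonlinear through $F'$, so for Schauder you need a \emph{well-defined} solution map, i.e.\ existence \emph{and uniqueness} for the convective Cahn--Hilliard equation with regular potential and a given divergence-free velocity; this is standard under Assumption \ref{prop of F} but must be proved (by its own Galerkin scheme) or cited, otherwise your fixed-point map $\Phi_n$ is not defined. Second, the continuity of $\Phi_n$ requires a continuous-dependence estimate for $\varphi$ with respect to the convecting velocity in the $C([0,T_\ast];P_n\G_{\text{div}})$ topology; you assert this follows from ``standard parabolic estimates,'' which is true but is precisely the place where the growth conditions (A3)--(A5) enter and should be written out. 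Neither point is a fatal gap, but both are load-bearing.
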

The existence of a global weak solution for the homogeneous system obtained in the Theorem \ref{thm4.2} guarantees a global weak solution $(\u,\varphi)$ for the non-homogeneous system \eqref{equ P}.
Hence we can prove the following  existence of a weak solution result for the nonhomogeneous system.
\begin{theorem}
Let $\u_0 \in \V^0(\Omega),$ $\varphi_0 \in \mathrm{H}^1$ and $T>0$. Let $\nu$ and $F$ satisfies (A1), (A2), (A4), (A7) from Assumption \ref{prop of F} and $\h$ satisfy Assumption \ref{H} and the compatibility condition \eqref{compatibility}.
Then, there exists a global  weak solution $(\u,\varphi)$ to the system \eqref{equ P} such that 
\begin{equation}\label{weak 3}
\left\{
\begin{aligned}
&	\u \in \mathrm{L}^{\infty}(0,T;\V^0(\Omega)) \cap \mathrm{L}^2(0,T;\V^1(\Omega)), \\
& \varphi \in \mathrm{L}^{\infty}(0,T;\mathrm{H}^1) \cap \mathrm{L}^2(0,T;\mathrm{H}^2), \\ 
	&\partial_t\u \in \mathrm{L}^{2}(0,T;{\V^1(\Omega)}'),  \\
		&\partial_t\varphi \in \mathrm{L}^{2}(0,T;({\mathrm{H}^1})'),  \\
		&\mu \in \mathrm{L}^2(0,T;\mathrm{H}^1).
\end{aligned}
\right.
\end{equation}
\end{theorem}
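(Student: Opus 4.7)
The plan is to obtain the weak solution of the nonhomogeneous system \eqref{equ P} directly from the lifted solution produced by Theorem~\ref{thm4.2}. First I would invoke Theorem~\ref{elliptic lifting} to construct the lifting $\u_e$ associated with $\h$. Thanks to the compatibility condition \eqref{compatibility} together with $\text{div}\,\u_0 = 0$ and $\h\cdot\n|_{\partial\Omega}=0$, the shifted initial datum $\overline{\u}_0 := \u_0 - \u_e(0)$ lies in $\G_{\text{div}}$. Applying Theorem~\ref{thm4.2} to this shifted datum yields a weak solution $(\overline{\u},\varphi)$ of the lifted system \eqref{7}, and I set $\u := \overline{\u} + \u_e$ as the candidate for \eqref{equ P}.

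Next I would reverse the reduction that led from \eqref{equ P} to \eqref{7}. Testing \eqref{weak formulation u} against an arbitrary $\v \in \V_{\text{div}}$ and adding the weak form of \eqref{equ1} also tested against $\v$ (the pressure of \eqref{equ1} drops out because $\v$ is divergence-free and vanishes on $\partial\Omega$), then using the identity $\nu(\varphi)\mathrm{D}\u = (\nu(\varphi)-\nu_1)\mathrm{D}\overline{\u} + \nu_1\mathrm{D}\overline{\u} + \nu(\varphi)\mathrm{D}\u_e$ together with the reassembly $((\overline{\u}+\u_e)\cdot\nabla)(\overline{\u}+\u_e) = (\u\cdot\nabla)\u$, I recover the weak formulation of the $\u$-equation in \eqref{equ P}. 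The $\varphi$-equation transfers from \eqref{weak formulation phi} after combining $\overline{\u}\cdot\nabla\varphi$ and $\u_e\cdot\nabla\varphi$ into $\u\cdot\nabla\varphi$. The boundary condition $\u|_{\partial\Omega} = \h$ follows from $\overline{\u}|_{\partial\Omega}=0$ and $\u_e|_{\partial\Omega}=\h$, and $\u(0)=\u_0$ holds by construction.

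The regularity \eqref{weak 3} is obtained by combining Theorem~\ref{thm4.2} with Theorem~\ref{elliptic lifting}. Since $\overline{\u} \in \mathrm{L}^\infty(0,T;\G_{\text{div}}) \cap \mathrm{L}^2(0,T;\V_{\text{div}}) \subset \mathrm{L}^\infty(0,T;\V^0(\Omega)) \cap \mathrm{L}^2(0,T;\V^1(\Omega))$ and $\u_e$ enjoys the regularity stated in Theorem~\ref{elliptic lifting}, the sum $\u$ satisfies the asserted spatial regularity. For the time derivative, $\partial_t\overline{\u} \in \mathrm{L}^2(0,T;\V_{\text{div}}')$ from Theorem~\ref{thm4.2} and $\partial_t\u_e \in \mathrm{L}^2(0,T;\V^0(\Omega))$ by \eqref{estimate ht}; identifying $\partial_t\u$ as an element of $\mathrm{L}^2(0,T;(\V^1(\Omega))')$ is then carried out from the equation itself, by estimating every right-hand side term as a functional on $\V^1(\Omega)$ using the standard Sobolev embedding $\V^1(\Omega) \hookrightarrow \mathbb{L}^4(\Omega)$ in two dimensions. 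The regularity of $\varphi$ and $\mu$ is unchanged by the lifting and is inherited directly from Definition~\ref{weak sol def}.

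All the nontrivial analysis is concentrated in Theorem~\ref{thm4.2} itself, so the hard part is already done; the present transfer argument is essentially bookkeeping. The only mildly delicate point is the identification of $\partial_t\u$ as a functional on the larger space $\V^1(\Omega)$ (rather than just on $\V_{\text{div}}$), which requires the term-by-term estimation above, together with the reconstruction of the pressure $\pi$ via De Rham's theorem in the standard way.
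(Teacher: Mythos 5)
Your strategy is the same as the paper's: the paper's entire proof of this theorem consists of citing Theorem \ref{elliptic lifting} for $\u_e$, Theorem \ref{thm4.2} for $(\overline{\u},\varphi)$, and then appealing to Definition \ref{weak sol def}, which \emph{defines} a weak solution of \eqref{equ P} to be a pair $\u=\overline{\u}+\u_e$ with $(\overline{\u},\varphi)$ satisfying the lifted formulation \eqref{weak formulation u}--\eqref{weak 2}. Your bookkeeping for the regularity of $\u$, $\partial_t\u$, $\varphi$ and $\mu$, and your observations on the boundary and initial conditions, match what the paper implicitly uses, and the remark that the compatibility condition \eqref{compatibility} is needed to place $\overline{\u}_0$ in $\G_{\text{div}}$ is a fair (and in fact necessary) supplement to the theorem's stated hypotheses.

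The one step where you go beyond the paper, and where the argument as written does not close, is the claimed reversal of the reduction for the viscous term. Testing \eqref{weak formulation u} against $\v\in\V_{\text{div}}$ contributes only $\big((\nu(\varphi)-\nu_1)\nabla\overline{\u},\nabla\v\big)$, and the weak form of \eqref{equ1} contributes $\nu_1(\nabla\u_e,\nabla\v)=0$; but your own identity $\nu(\varphi)\mathrm{D}\u=(\nu(\varphi)-\nu_1)\mathrm{D}\overline{\u}+\nu_1\mathrm{D}\overline{\u}+\nu(\varphi)\mathrm{D}\u_e$ shows that recovering the term $2\big(\nu(\varphi)\mathrm{D}\u,\mathrm{D}\v\big)$ of \eqref{equ P} additionally requires the contributions $\nu_1\big(\mathrm{D}\overline{\u},\mathrm{D}\v\big)$ and $\big((\nu(\varphi)-\nu_1)\mathrm{D}\u_e,\mathrm{D}\v\big)$, neither of which appears in \eqref{weak formulation u} nor is supplied by the Stokes system. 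Equivalently, substituting $\u=\overline{\u}+\u_e$ into \eqref{equ P} generates the terms $-\nu_1\,\mathrm{div}(\mathrm{D}\overline{\u})$ and $-\mathrm{div}\big((\nu(\varphi)-\nu_1)\mathrm{D}\u_e\big)$, which the lifted system \eqref{7} omits (only the gradient part $-\nu_1\,\mathrm{div}(\mathrm{D}\u_e)$ can be absorbed into the pressure via \eqref{equ1}). This mismatch originates in the paper's formulation of \eqref{7} rather than in your plan, but it means the sentence ``I recover the weak formulation of the $\u$-equation in \eqref{equ P}'' is not justified as stated: either one retreats to the paper's purely definitional reading, in which case the verification is vacuous, or one must first correct \eqref{7} to include the missing viscous cross terms, after which your reassembly and the reconstruction of the pressure by De Rham's theorem go through.
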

\begin{proof}
From the Theorem \ref{thm4.2}, we have a global weak solution $(\overline{\u},\varphi)$ for the homogeneous system \eqref{7}.  The Theorem \ref{elliptic lifting} proves the existence of $\u_e$. Hence, in the sense of Definition \ref{weak sol def}, we can prove the existence of a weak solution $ \u, \varphi$ for the system \eqref{equ P}.
\end{proof}
\begin{proof}\textbf{[Proof of Theorem \ref{thm4.2}:]}
We will prove the existence by a Semi-Galerkin approximation. We use the  Faedo-Galerkin approximation only for the concentration equation.
Hence we consider $\{\psi_k\}_{k \geq 1}$, which are  eigenfunctions of Neumann operator $B_1 = -\Delta + I$ in $\mathrm{H}^1$. Let $\Psi_n = \langle \psi_1,...,\psi_n \rangle$ be $n$-dimensional subspace, $ \overline{P}_n = P_{\Psi_n}$ be orthogonal projector of $\Psi_n$ in $\mathrm{L}^2$. We complete the proof in several steps. 

First we will find an approximate solution $\overline{\u}^n \in  \mathrm{L}^\infty(0, T; \G_{\text{div}}) \cap \mathrm{L}^2(0, T; \V_{\text{div}})$ and $\varphi^n \in \mathrm{H}^1(0, T; \Psi_n)$ that satisfies the following weak formulation:
\begin{align}\label{dist for u}
    \langle \partial_t\overline{\u}^n(t),&\mathbf{v} \rangle + (\nu(\varphi^n)-\nu_1) (\nabla \overline{\u}^n(t),\nabla\mathbf{v}) +  (((\overline{\u}^n(t) + \u_e(t)) \cdot \nabla) (\overline{\u}^n(t) + \u_e(t)),\mathbf{v})\no  \\  &= (\mu^n(t) \nabla \varphi^n(t),\mathbf{v}) 
     -\langle \partial_t\u_e(t), \mathbf{v} \rangle, \,   \text{ holds } 
     \forall \, \mathbf{v} \in \V_{\text{div}}, \text{ a.e.  } t \in  (0, T), 
\end{align}
and
\begin{align}\label{dist for phi}
    \langle \partial_t\varphi^n(t), \psi \rangle + \big((\overline{\u}^n(t) + \u_e(t)) \cdot\nabla \varphi^n(t), \psi \big) + (\nabla \mu^n(t
    ), \nabla\psi) = 0, \, \text{ holds }  \forall \, \psi \in \Psi_n, \text{ a.e. } t \in (0, T),
\end{align}
where 
\begin{align}\label{dist for mu}
    \mu^n = \overline{P}_n(-\Delta\varphi_n + F'(\varphi_n),
\end{align}
together with the initial conditions
\begin{equation}\label{initial cond}
\left\{
     \begin{aligned}
    \overline{\u}^n(x, 0) &=  \overline{\u}_0, \, \, \text{ in } \Omega,  \\
    \varphi^n(x, 0) &= \overline{P}_n\varphi_0 =: \varphi^n_0 \, \, \text{ in } \Omega.
\end{aligned}
\right.
\end{equation}
We will use Schauder's fixed-point theorem to prove the existence of approximate solutions. Since our equation is a coupled nonlinear equation, to set up a fixed point argument, we use the following splitting:
first, for a given vector,
\begin{center}
$\overline{\varphi}^n  \in C([0, T]; \Psi_n), $
we find
  $ \overline{\u}^n \in  \mathrm{L}^\infty(0, T; \G_{\text{div}}) \cap \mathrm{L}^2(0, T; \V_{\text{div}}), $
\end{center}
which satisfies \eqref{dist for u}. Then, for this $\overline{\u}^n $ we find $\varphi^n \in \mathrm{H}^1(0, T; \Psi_n)$ which satisfies \eqref{dist for phi}. This gives us approximate solution $(\overline{\u}^n, {\varphi}^n) $. Finally, using appropriate convergences, we can find the solution for \eqref{7}. 
More precisely, for given
\begin{align*}
\overline{\varphi}^n(\x, t) = \sum_{i = 1}^n c_i(t)\psi_i \,   \in C([0, T]; \Psi_n),  \text{ and } 
    \overline{\mu}^n(\x, t) = \sum _{i = 1}^n d_i(t) \psi_i \,   \in C([0, T]; \Psi_n),
    \end{align*}
we find 
\begin{align}
   \overline{\u}^n \in  \mathrm{L}^\infty(0, T; \G_{\text{div}}) \cap \mathrm{L}^2(0, T; \V_{\text{div}}), \no
\end{align}
which solves
    \begin{equation}\label{APFu}
\left\{    
    \begin{aligned} 
    &\langle \partial_t\overline{\u}^n(t),\mathbf{v} \rangle + (\nu(\overline{\varphi}^n)-\nu_1) (\nabla \overline{\u}^n(t),\nabla\mathbf{v}) +\big(((\overline{\u}^n(t) + \u_e(t)) \cdot \nabla) (\overline{\u}^n(t) + \u_e(t)),\mathbf{v}\big) \\ &\qquad\quad= (\overline{\mu}^n(t) \nabla \overline{\varphi}^n(t),\mathbf{v})  -\langle \partial_t\u_e(t), \mathbf{v} \rangle, \quad 
     \forall \mathbf{v} \in \V_{\text{div}}, \text{ a.e  } t \in  (0, T),  \,  \\
    &\overline{\u}^n(x, 0) =  \overline{\u}_0, \quad \text{ in } \Omega.
\end{aligned}
\right.
\end{equation}
Further for  solution $\overline{\u}^n \in  \mathrm{L}^\infty(0, T; \G_{\text{div}}) \cap \mathrm{L}^2(0, T; \V_{\text{div}})$ of \eqref{APFu},  we will find $\varphi^n \in \mathrm{H}^1(0, T; \Psi_n)$ that satisfies the following equations 
\begin{equation}\label{APFphi}
\left\{    
    \begin{aligned}
  \langle \partial_t\varphi^n, \psi \rangle + ( (\overline{\u}^n + \u_e) \cdot \nabla \varphi^n, \psi) &+ (\nabla \mu^n, \nabla\psi) = 0 ,  \, \, \text{ for all } \psi \in \Psi_n,   \\
   \mu^n = \overline{P}_n(-\Delta\varphi_n + F'(\varphi_n))\\
  \varphi^n(x, 0) = \varphi^n_0, \, \, \text{ in } \Omega.
\end{aligned}
\right.
\end{equation}
Here,  $c_i, \, d_i \, \in C([0, T])$ which implies,  
$$\sup_{t \in [0,T]} \sum_{i =1}^n |c_i(t)|^2 \leq M \text{ and } \sup_{t \in [0,T]} \sum_{i =1}^n |d_i(t)|^2 \leq M$$ for a positive number $M$, which depends on $\varphi_0 \text{ and } |\Omega|$. 
Therefore we have $$\sup_{t \in [0, T]}\|\overline{\varphi}^n(\cdot, t)\|^2 \leq M.$$
\textbf{Step 1:}
 For a given $\overline{\varphi}^n$ and $\overline{\mu}^n$ and for given $T> 0$ we find the solution of \eqref{APFu}, $\overline{\u}^n \in  \mathrm{L}^\infty(0, T; \G_{\text{div}}) \cap \mathrm{L}^2(0, T; \V_{\text{div}})$ by applying  \cite[Theorem 1.6]{barbu}. Now we derive an estimate of $\overline{\u}^n$ such that it continuously depends on given $\overline{\varphi}^n$, $\overline{\mu}^n$ and initial data $\overline{\u}_0$. So, taking test function $\mathbf{v}$ as $\overline{\u}^n$ in \eqref{APFu} we get
\begin{align}\label{w1}
    \frac{1}{2}\frac{d}{dt} \|\overline{\u}^n\|^2 + (\nu(\overline{\varphi}^n)-\nu_1)  \| \nabla \overline{\u}^n \|^2 &= -b(\overline{\u}^n, \u_e, \overline{\u}^n) - b(\u_e, \u_e, \overline{\u}^n) -  \int_{\Omega} \overline{\mu}^n (\nabla \overline{\varphi}^n \cdot\overline{\u}^n) \no\\&\quad- \langle \partial_t\u_e, \overline{\u}^n\rangle.
\end{align}
We estimate each term in R.H.S. of \eqref{w1}. By using integration by parts and H\"older inequality, we obtain
\begin{align}\label{w2}
 \Big|\int_{\Omega}\overline{\mu}^n (\nabla \overline{\varphi}^n\cdot \overline{\u}^n)\Big| & \leq \|\nabla\overline{\mu}^n\|\| \overline{\varphi}^n\|_{\mathrm{L}^4} \| \overline{\u}^n\|_{\mathbb{L}^4} \leq C M^2 |\Omega|^2 +\frac{\nu}{4} \|\nabla\overline{\u}^n \|^2. 
 \end{align}
 In the above, we have used the expression of $\overline{\varphi}^n, \, \overline{\mu}^n$ and the fact that $\psi_i$ are eigenfunctions of $B_1$.
Furthermore, we have some straightforward estimates on the trilinear operator 
\begin{itemize}
    \item $|b(\overline{\u}^n, \u_e, \overline{\u}^n)|   \leq \|\overline{\u}^n\|_{\mathbb{L}^4} \, \|\nabla\u_e\| \,  \|\overline{\u}^n\|_{\mathbb{L}^4} 
   \leq C\|\nabla \u_e\| ^2 \|\overline{\u}^n\|^2 + \frac{\nu}{2} \|\nabla \overline{\u}^n\|^2$,
   \item $|b(\u_e, \u_e, \overline{\u}^n)|   \leq \| \u_e \|_{\mathbb{L}^\infty} \, \| \nabla\u_e \|  \, \|\overline{\u}^n \| 
    \leq \frac{2}{C}\|\nabla\u_e\|^2 \|\overline{\u}^n\|^2 + \frac{C}{2}\|\u_e\|_{\V^2(\Omega)}^2 $.
\end{itemize}
Using Theorem \ref{elliptic lifting}, we also have 
\begin{itemize}
    \item$ \langle \partial_t\u_e, \overline{\u}^n\rangle   \leq \|\partial_t\u_e\| \, \|\overline{\u}^n\| 
     \leq \frac{1}{2} \|\partial_t\u_e\|^2 + \frac{1}{2} \|\overline{\u}^n\|^2 $
\end{itemize}
Now from \eqref{w1} and using Remark \ref{nu}, we obtain
\begin{align}\label{w6}
    \frac{1}{2}\frac{d}{dt} \|\overline{\u}^n\|^2 + \frac{\nu}{2} \| \nabla \overline{\u}^n \|^2  & \leq C M^2 |\Omega|^2 + \frac{
     1}{2}{\|\partial_t\u_e\|^2}+ C\|\u_e\|_{\V^2(\Omega)}^2+C(1 +  \|\nabla\u_e\|^2 )  \|\overline{\u}^n\|^2.
\end{align}
Integrating \eqref{w6} from $0$ to $t$ and using Gronwall's inequality finally  we deduce
\begin{align}\label{13}
   \|\overline{\u}^n\|^2 + \nu \int_{0}^{t}\| \nabla \overline{\u}^n \|^2  \leq C \Big(&\|\overline{\u}_0^n \|^2 +  \int_{0}^{t}M^2 |\Omega|^2 + \int_{0}^{t}\|\partial_t\h\|_{\V^{\frac{1}{2}}(\Gamma)}^2+\int_{0}^{t}\|\h\|_{\V^{\frac{3}{2}}(\Gamma)}^2 \Big) \no\\&\times \exp \big(Ct+C\int_{0}^{t} \|\h\|_{\V^{\frac{1}{2}}(\Gamma)}^2\big)  , \, \forall t \in [0,T),  
\end{align}
which implies that there exists a constant $L$ depending on $T, M $ and initial and boundary data such that
\begin{align}
\|\overline{\u}^n\|_{\mathrm{L}^\infty(0, T; \G_{\text{div}})\cap\mathrm{L}^2(0, T; \V_{\text{div}})} & < L(T, M).
\end{align}
 Let us define the operator $\I_n : C(0, T; \Psi_n) \longrightarrow \mathrm{L}^\infty(0, T; \G_{\text{div}}) \cap \mathrm{L}^2(0, T; \V_{\text{div}})$ given by $\I_n(\overline{\varphi}^n) = \overline{\u}^n$. Observe that $\I_n$ is continuous which   can be easily seen from \eqref{13} as  $\overline{\u}^n$ continuously depends on initial data and the fixed vector $\overline{\varphi}^n \text{ and } \overline{\mu}^n$. 

\textbf{Step 2: } Now we want to find $\varphi^n ( x, t) = \sum_{i = 1}^n \overline{c}_i(t) \psi_i$  and $\mu^n (x, t) = \sum_{i = 1}^n \overline{d}_i(t) \psi_i$ which satisfies  \eqref{APFphi} for the $\overline{\u}^n$, which we have determined in Step 1.
By taking $\psi = \psi_i$ in \eqref{APFphi}, we get a system of non-linear ODEs in $\overline{c}_i(t)$ given by
\begin{align}\label{w7}
    \overline{c}_i'(t) + \sum_{j = 1}^n \overline{c}_j(t) \int_\Omega (\overline{\u}^n \cdot \nabla)\psi_j \, \psi_i \, dx = 
    \overline{c}_i(t)^2  &+ \int_\Omega \psi_i \F'\big( \sum_{i = 1}^n \overline{c}_i(t) \psi_i\big)    - \sum_{j = 1}^n \overline{c}_j(t) \int_\Omega (\u_e \cdot \nabla)\psi_j \, \psi_i
\end{align}
 for all $ i=1, \cdots, n.$ Using the local Lipschitz property of the nonlinear terms and invoking Carath\'eodory's  existence theorem we ensure that \eqref{w7} admits a unique local solution in $[0, T_n)$ for some $T_n \in (0, T]$ such that $\overline{c}_i\in \mathrm{H}^1(0, T_n)$ ( $\overline{d}_i$ can be found by solving $\eqref{APFphi}_2$ ). Thus, we have $\varphi^n (t, x) \in \mathrm{H}^1(0, T_n; \Psi_n)$.

Now we will show that $\varphi^n$ is bounded in $\mathrm{L}^{\infty}(0,T_n;\mathrm{H}^1) \cap \mathrm{L}^{2}(0,T_n;\mathrm{H}^2)$. Taking the test function $\psi$ to be $\mu^n +\varphi^n$  in $\eqref{APFphi}_1$, we derive
\begin{align}\label{15b}
    \frac{d}{dt}\Big( \frac{1}{2}\|\varphi^n \|^2 + \frac{1}{2}\|\nabla\varphi^n \|^2 + &\int_\Omega F(\varphi^n)\Big) + \|\nabla\mu^n\|^2 = -( (\overline{\u}^n + \u_e) \cdot \nabla \varphi^n, \mu^n) \no \\
   & \quad -( (\overline{\u}^n + \u_e) \cdot \nabla \varphi^n, \varphi^n)-(\nabla \mu^n, \nabla\varphi^n).  
\end{align}  
Estimating the RHS of \eqref{15b}, we have
\begin{align*}
&\frac{d}{dt}\Big( \frac{1}{2}\|\varphi^n \|^2 + \frac{1}{2}\|\nabla\varphi^n \|^2 + \int_\Omega F(\varphi^n)\Big) + \|\nabla\mu^n\|^2 \no \\ & \leq C\|\overline{\u}^n\|_{\mathbb{L}^4} \|\nabla\mu^n\| \|\varphi^n\|_{\mathrm{L}^4} + C\|\u_e\|_{\V^2(\Omega)}\|\varphi^n\|(\|\nabla\mu^n\|+\|\nabla\varphi^n\|) + \frac{1}{4} \|\nabla\mu^n\|^2+ C\|\nabla\varphi^n\|^2 .
\end{align*}
After some straightforward calculations, we deduce
\begin{align}\label{15}
&\frac{d}{dt}\Big( \frac{1}{2}\|\varphi^n \|^2_{\mathrm{H}^1}  + \int_\Omega F(\varphi^n)\Big) + \frac{1}{4}\|\nabla\mu^n\|^2  \leq C (\|\nabla\overline{\u}^n\|^2 + \|\u_e\|^2_{\V^2(\Omega)}+ 1)\|\varphi^n\|_{\mathrm{H^1}}^2 
\end{align}
Integrating \eqref{15} from $0$ to $T_n$ we obtain 
\begin{align*}
  \|\varphi^n \|_{\mathrm{H^1}}^2 + 2\int_\Omega F(\varphi^n) &+\frac{1}{2}\int_{0}^{T_n}\|\nabla\mu^n\|^2 \leq \|\varphi^n_0\|_{\mathrm{H}^1}^2  +  2\int_\Omega F(\varphi^n_0)\\ &\quad+ C \int_{0}^{T_n}(\|\nabla\overline{\u}^n\|^2 + \|\u_e\|^2_{\V^2(\Omega)}+ 1) \,\|\varphi^n\|_{\mathrm{H^1}}^2 .  
\end{align*}
Now, we will show $\varphi^n \in \mathrm{L}^2(0, T_n; \mathrm{H}^2)$. We have 
\begin{align*}
   \frac{1}{4} \|\nabla\mu^n \|^2 + \|\nabla\varphi^n \|^2 \geq (\mu^n, -\Delta \varphi^n) \geq \|\Delta\varphi^n\|^2 - C_3 \|\nabla\varphi^n\|^2,
\end{align*}
which implies
\begin{align}\label{17}
    \frac{1}{2} \|\nabla\mu^n \|^2 & \geq \|\Delta\varphi^n\|^2 - (C_3 +1) \|\nabla\varphi^n\|^2.
\end{align}
Combining \eqref{15} and \eqref{17} we obtain
\begin{align}\label{17a}
   \frac{d}{dt}\Big(\|\varphi^n \|_{\mathrm{H}^1}^2  +2\int_\Omega F(\varphi^n)\Big) + \|\Delta\varphi^n\|^2 + \frac{1}{4} \|\nabla\mu^n \|^2 & \leq C (\|\nabla\overline{\u}^n\|^2 + \|\u_e\|^2_{\V^2(\Omega)} +  1) \, \|\varphi^n\|_{\mathrm{H}^1}^2 .   
\end{align}
Integrating \eqref{17a} from $0$ to $t$, $0\leq t \leq T_n$, and then using Gronwall inequality and the  estimate \eqref{13} to estimate RHS, yields 
\begin{align}\label{16}
    \|\varphi^n(t)\|_{\mathrm{H}^1}^2 + 2\int_\Omega F(\varphi^n) + \int_0^{t} \|\Delta\varphi^n\|^2 + \frac{1}{4} \int_0^{t} \|\nabla\mu^n \|^2 & \leq C\Big(\|\varphi^n_0\|_{\mathrm{H}^1}^2 + 2\int_\Omega F(\varphi_0^n)\Big) 
    \\ &  \quad\times\exp(T_n + L + \|\h\|_{\mathrm{L}^2(0, T; \V^\frac{3}{2}(\Gamma))})\no
\end{align}
 Let $M > 0$ be such that $\|\varphi_0^n\|_{\mathrm{H^1}}^2 +2\int_\Omega F(\varphi_0^n) < \frac{M}{R}$, where $R>0$ is a constant. Then from \eqref{13} and \eqref{16}, if we consider $\|\overline{\varphi}^n(t)\|_{\mathrm{H^1}}^2 < M$ for all $t \in [0, T_n]$, we obtain $\|\varphi^n(t)\|_{\mathrm{H^1}}^2 < M$ for all $t \in [0, T'_n]$ where $0 < T'_n < T_n$ is sufficiently small.\\ 
From \eqref{16}, it easy to see that $\varphi^n$  depends continuously on initial data $\varphi_0$ and fixed $\overline{\u}^n$. Therefore by solving \eqref{APFphi} we get another continuous operator $\J_n: \mathrm{L}^\infty(0, T_n; \G_{\text{div}}) \cap \mathrm{L}^2(0, T_n; \V_{\text{div}}) \longrightarrow  \mathrm{H^1}(0, T_n; \Psi_n) $ such that $\J_n(\overline{\u}^n) = \varphi^n$.

\textbf{Step 3: }Therefore, the composition map 
\begin{align*}
    \J_n \circ \I_n: C([0, T_n]; \Psi_n) \longrightarrow \mathrm{H^1}(0, T_n; \Psi_n), \, \, \J_n \circ \I_n(\overline{\varphi}^n) = \varphi^n,
\end{align*}
is continuous. Since $\Psi_n$ is a finite dimensional space, compactness of $\mathrm{H^1}(0, T_n; \Psi_n)$ into $C([0, T_n]; \Psi_n)$ gives that $\J_n \circ \I_n$ is a compact operator from $C([0, T_n]; \Psi_n)$ to itself. We can choose $T'_n \in (0, T_n)$ such that $\sup_{t \in [0, T'_n]}\|\varphi^n(t)\|_{\mathrm{H^1}}^2 < M.$ Then by Schauder's fixed point theorem there exists a fixed point $\varphi^n$ in the set
\begin{align*}
   \Big \{\varphi^n \in C([0, T'_n]; \Psi_n) \, | \, \sup_{t \in [0, T'_n]}\|\varphi^n(t)\|_{\mathrm{H^1}}^2 < M \, \text{ with } \varphi^n(0) =\overline{P}_n\varphi_0 \Big\},
\end{align*}
such that $\varphi^n \, \in \, \mathrm{H^1}(0, T'_n; \Psi_n) \text{ and } \overline{\u}^n \, \in \, \mathrm{L}^\infty(0, T'_n; \G_{\text{div}}) \cap \mathrm{L}^2(0, T'_n; \V_{\text{div}})$ and satisfies the variational formulation
\begin{equation}\label{as}
\left\{    
    \begin{aligned}
&\langle \partial_t\overline{\u}^n(t),\mathbf{v} \rangle + (\nu(\varphi^n)-\nu_1) (\nabla \overline{\u}^n(t),\nabla\mathbf{v}) +\big(((\overline{\u}^n(t) + \u_e(t)) \cdot \nabla) (\overline{\u}^n(t) + \u_e(t)),\mathbf{v}\big) \\ & \quad= (\mu^n(t) \nabla \varphi^n(t),\mathbf{v})  -\langle \partial_t\u_e(t), \mathbf{v} \rangle, \quad 
     \forall \mathbf{v} \in \V_{\text{div}}, \text{ a.e  } t \in  (0, T'_n),\\
    & \langle \partial_t\varphi^n(t), \psi \rangle + ( (\overline{\u}^n + \u_e)(t) \cdot \nabla \varphi^n(t), \psi) + (\nabla \mu^n(t), \nabla\psi) = 0 ,  \, \, \text{ for all } \psi \in \Psi_n, \text{ a.e  } t \in  (0, T'_n).   
\end{aligned}
\right.
\end{equation}

\textbf{Step 4: } Now we want to show that $T_n'=T$.  We know that $(\overline{\u}^n, \varphi^n)$ satisfies \eqref{as}. It is easy to see that $(\overline{\u}^n, \varphi^n)$ satisfy the energy estimate \eqref{mes}. So, we have
\begin{align}\label{4.22}
    \|\varphi^n &\|_{\mathrm{H^1}}^2  + \|\overline{\u}^n\|^2  + \frac{\nu}{2} \int_0^{T_n'} \| \nabla \overline{\u}^n \|^2 \no  + \int_0^{T_n'}\|\Delta\varphi^n\|^2 + \frac{1}{2}\int_0^{T_n'} \|\nabla\mu^n\|^2  \no \\& \leq C(\|\h\|_{\mathrm{L}^2(0, T;\V^\frac{3}{2}(\Gamma))}, \|\partial_t \h\|_{\mathrm{L}^2(0, T;\V^{\frac{1}{2}}(\Gamma))}, \|\varphi_0\|_{\mathrm{H}^1}, \|\overline{\u}_0\|).
\end{align}
Note that the right-hand side of \eqref{4.22} is independent of $n$. Thus, we get the uniform bound for the approximate solution $(\overline{\u}^n, \varphi^n)$.  Thus, we can extend the solution to [0, T], and $\overline{\u}^n$, ${\varphi}^n$ satisfies
\begin{align}
    &\overline{\u}^n \text{ uniformly bounded in } \mathrm{L}^\infty(0, T; \G_{\text{div}})\cap \mathrm{L}^2(0, T; \V_{\text{div}}), \label{UEFu} \\
 &\varphi^n \text{ uniformly bounded in } \mathrm{L}^\infty(0, T;\mathrm{H}^1) \cap \mathrm{L}^2(0, T; \mathrm{H}^2), \label{UEFphi} \\
& \mu^n \text{ uniformly bounded in }  \mathrm{L}^2(0, T; \mathrm{H}^1). \label{UEFmu} 
\end{align}  

\textbf{Step 5: }Now we will estimate $\partial_t\overline{\u}^n \in \mathrm{L}^2(0, T; \V_{\text{div}}')$ and $\partial_t\varphi^n\in \mathrm{L}^2(0, T; ({\mathrm{H}^1})')$. For that, let us first recall that $\{\mathbf{e}_j\}$ are eigenfunctions of $\A$ and define $n$ dimensional subspace $\H_n = \langle \mathbf{e}_1, \mathbf{e}_2, \cdots, \mathbf{e}_n\rangle $ and consider the orthogonal projection of the space $\G_{\text{div}}$ on $\H_n$ defined by  $P_n:=P_{\H_n}$. Then
we can write \eqref{APFu} as
\begin{align}
    \partial_t\overline{\u}^n + P_n(\text{div}((\nu(\varphi)-\nu_1) \mathrm{D}\overline{\u}^n) + \B(\overline{\u}^n, \overline{\u}^n) + \B_1(\overline{\u}^n, \u_e)+\B_2(\u_e, \overline{\u}^n) + \B_3(\u_e, \u_e)\no\\ - \mu^n  \nabla \varphi^n + \partial_t\u_e) = 0. \label{ut projection} 
\end{align}

We multiply the equation \eqref{ut projection} by  $\overline{\u} = P_n\overline{\u} + (I - P_n)\overline{\u}$  and integrate by parts and use the fact that $P_n\overline{\u}$ is orthogonal to $(I - P_n)\overline{\u}$ and estimate every term in \eqref{ut projection} as follows:
\begin{itemize}
    \item $\|P_n(\text{div}((\nu(\varphi)-\nu_1) \mathrm{D}\overline{\u}^n)\|_{\V_{\text{div}}^{'}}  \leq C(\nu_2-\nu_1) \|\overline{\u}^n\|_{\V_{\text{div}}}\label{stokes projection}$, \\
    \item $\|P_n\B_1(\overline{\u}^n, \u_e)\|_{\V_{\text{div}}^{'}}    \leq C \|\overline{\u}^n\|_{\V_{\text{div}}} \|\nabla\u_e\|$,\\
    \item $\|P_n\B_2(\u_e, \overline{\u}^n)\|_{\V_{\text{div}}^{'}}\leq (\nu_2-\nu_1)\|\nabla\ou^n\|^2_{\V_{\text{div}}}+C\|\u_e\|^4_{\V^1(\Omega)}\|\ou^n\|^2,$\\
    \item $\|P_n\B_3(\u_e, \u_e)\|_{\V_{\text{div}}^{'}}  
    \leq C \|\u_e\|^2_{\V^1(\Omega)}$,\\
    \item $\|P_n(\mu^n  \nabla \varphi^n)\|_{\V_{\text{div}}'}   \leq \|\varphi^n\|_{\mathrm{H}^1} \, \|\nabla\mu^n\|$,\\
    \item $\|P_n(\partial_t\u_e)\|_{\V_{\text{div}}'}  \leq C \|\partial_t\u_e\|$ . 
\end{itemize}
Substituting these estimates in \eqref{ut projection} we get the bound on the time derivative of $\overline{\u}^n$ as 
\begin{align}
   &\int_{0}^{T} \|\partial_t\overline{\u}^n(t)\|^2_{\V'_{\text{div}}} \leq C \Big[(\nu_2-\nu_1)\int_{0}^{T} \|\overline{\u}^n(t)\|^2_{\V_{\text{div}}} dt  +\int_{0}^{T}\|\h(t)\|_{\V^{\frac{1}{2}}(\Gamma)} ^2 dt  \label{UEFut}
   \\ &  +\sup_{t \in [0, T]}\|\varphi^n(t)\|^2_{\mathrm{H}^1} \int_{0}^{T} \|\nabla\mu^n(t)\|^2 dt+\sup_{t \in [0, T]} \|\u_e(t)\|^2_{\V^1(\Omega)} \Big(\int_{0}^{T}\|\overline{\u}^n(t)\|^2_{\V_{\text{div}}} dt + \int_{0}^{T}\|\u_e(t)\|^2_{\V^2(\Omega)} dt\Big)\Big].\no   
\end{align}
Similarly, we can write \eqref{APFphi} as 
\begin{align}
 \partial_t \varphi^n + \overline{P}_n \Big( (\overline{\u}^n + \u_e) \cdot \nabla \varphi^n - \Delta\mu^n\Big) = 0.\label{phit projection} 
\end{align}
Let us consider $\psi = \psi_1 + \psi_2$  where $\psi_1 = \overline{P}_n\psi = \sum_{i = 1}^{n} (\psi, \psi_i)\psi_i \in \Psi_n$ and $\psi_2 = (I - \overline{P}_n)\psi  = \sum_{i = n}^{\infty} (\psi, \psi_i)\psi_i \in \Psi_n^{\perp}$ as a test function. Now taking duality pairing of \eqref{phit projection} with $\psi$ and using the fact that $\psi_1$ and $\psi_2$ are orthogonal in  the space $\mathrm{V}$, we obtain
\begin{itemize}
    \item $\Big|\int_{\Omega}\overline{P}_n(\overline{\u}^n \cdot \nabla\varphi^n)\psi_1\Big|\leq C \|\overline{\u}^n\|_{\V_{\text{div}}} \|\nabla \varphi^n\|\|\psi\|_{\mathrm{H}^1}$, \\
    \item $\Big|\int_{\Omega}\overline{P}_n(\u_e \cdot \nabla \varphi^n)\psi_1\Big|  \leq C \|\u_e\|_{\V^1(\Omega)} \|\nabla\varphi^n\|\|\psi\|_{\mathrm{H}^1}$, \\
    \item $\Big|\int_{\Omega} \overline{P}_n(\Delta \mu^n) \psi_1\Big|  \leq C \|\nabla \mu^n\| \, \|\psi\|_{\mathrm{H}^1}$.
\end{itemize}
Therefore, from \eqref{phit projection} we have
\begin{align}
   \int_{0}^{T} \|\partial_t\varphi^n(t)\|^2_{({\mathrm{H}^1})'} & \leq C \, \sup_{t \in [0, T]} \|\varphi^n(t)\|^2_{\mathrm{H}^1} \int_{0}^{T}\big(\|\overline{\u}^n(t)\|^2_{\V_{\text{div}}} +  \|\u_e(t)\|^2_{\V^2(\Omega)}\big) dt + \int_{0}^{T}\|\nabla \mu^n(t)\| dt . \label{UEFphit}
\end{align}
\textbf{Step 6: }From \eqref{UEFu}-\eqref{UEFmu}, \eqref{UEFut}, \eqref{UEFphit} and by using Banach-Alaoglu theorem we can extract subsequences $\overline{\u}^n$, $\varphi^n$  and $\mu^n$ that satisfy
\begin{equation}\label{3.39}
\left\{
\begin{aligned}
    \overline{\u}^n & \rightharpoonup \overline{\u} \text{ in } L^\infty(0, T; \G_{\text{div}}) \text{ weak - }\ast,  \\
   \overline{\u}^n & \rightharpoonup \overline{\u} \text{ in } L^2(0, T; \V_{\text{div}}) \text{ weak },\\
   \partial_t\overline{\u}^n & \rightharpoonup \partial_t\overline{\u} \text{ in } L^2(0, T; \V'_{\text{div}}) \text{ weak }, \\
    \overline{\u}^n & \rightarrow \overline{\u} \text{ in } L^2(0, T; \G_{\text{div}}) \text{ strong }, \\
   \end{aligned}
\right.
\end{equation}
\begin{equation}\label{3.40}
\left\{
\begin{aligned}
   \varphi^n & \rightharpoonup \varphi \text{ in } L^\infty(0, T; \mathrm{H^1}) \text{ weak - }\ast, \\
   \varphi^n & \rightharpoonup \varphi \text{ in } L^2(0, T; \mathrm{H}^2) \text{ weak }, \\
 \partial_t \varphi^n & \rightharpoonup \partial_t\varphi \text{ in } L^2(0, T; ({\mathrm{H}^1})') \text{ weak }, \\
   \mu^n & \rightharpoonup \mu \text{ in } L^2(0, T; \mathrm{H^1}) \text{ weak },\\
    \varphi^n & \rightarrow \varphi \text{ in } L^2(0, T; \mathrm{H^1}) \text{ strong }.
\end{aligned}
\right.
\end{equation}
\textbf{Step 7: }With these convergences established, our primary objective is to take the limit in the equations \eqref{as}. The process of passing to limit in the linear terms of \eqref{as} is straightforward. Additionally, leveraging the strong convergence outlined in \eqref{3.39}-\eqref{3.40}, the process of passing to the limit in the nonlinear terms follows standard procedures, as detailed comprehensively in \cite{weak}. Since $\overline{\u} \in \mathrm{L}^2(0, T; \V_{\text{div}})$ and $\overline{\u}_t  \in \mathrm{L}^2(0, T; \V'_{\text{div}})$, it implies $\overline{\u} \in C([0, T]; \G_{\text{div}})$ using Aubin-Lions' compactness lemma. Similarly, $\varphi \in  \mathrm{L}^\infty(0, T; \mathrm{H^1})$ and $\partial_t\varphi\in  \mathrm{L}^2(0, T; ({\mathrm{H}^1})')$, which gives $\varphi \in C([0, T]; \mathrm{H})$. The initial condition also satisfies in the weak sense since $\overline{\u}$ and $\varphi$ are right continuous at 0.
\end{proof}

 Several remarks are in order, derived from the properties of the weak solution.
\begin{remark}\label{L4esti_phi}
    Note that if $(\u, \varphi)$ is a weak solution of the system \eqref{equ P} and additionally $F$ satisfy (A5) then $\varphi\in \mathrm{L}^4(0, T;\mathrm{H}^2)\cap\mathrm{L}^2(0,T;\mathrm{H}^3)$ for any $T>0$, indeed, we have the following estimate:
     \begin{align*} \int_0^T\|\Delta\varphi(t)\|^4 dt &\leq \int_0^T\|\nabla\mu(t)\|^2 \|\nabla\varphi(t)\|^2 dt + C_3\int_0^T\|\nabla\varphi(t)\|^2 dt\\
      & \leq \|\nabla\mu\|^2_{\mathrm{L}^2(0, T; \mathrm{L}^2)}\|\varphi\|^2_{\mathrm{L}^\infty(0, T; \mathrm{H}^1)} + C_3 \|\varphi\|^4_{\mathrm{L}^\infty(0, T; \mathrm{H}^1)}.
  \end{align*}
   By the elliptic regularity theory one can conclude that $\varphi \in \mathrm{L}^4(0, T; \mathrm{H}^2).$ \\
  To see $\varphi\in \L^2(0,T; \mathrm{H}^3)$, taking the formal gradient of  $\eqref{7}_2$ we get, 
\begin{align*}
    \nabla \mu= -\nabla \Delta \varphi + \nabla F'(\varphi) .
\end{align*}Taking the $\mathrm{L}^2$-norm on both sides of the above equation, we further get,
\begin{align}\label{e6}
     \|\nabla (\Delta\varphi)\|^2 & \leq \|\nabla \mu\|^2 + \|\nabla (F'(\varphi)\|^2 .
     \end{align}
     Now, from (A5) and Young's inequality, we have,
    \begin{align}\label{e7}
    \|\nabla F'(\varphi)\|^2  & = \|F''(\varphi) \nabla \varphi \|^2 =  \int_\Omega (C_4|\varphi|^{2q-2}+C_4') |\nabla \varphi|^2 \no \\
    & \leq C_4 \int_\Omega |\varphi|^{2q-2} |\nabla\varphi|^2 + C_4' \|\nabla \varphi\|^2 \no \\
    & \leq C_4 \|\varphi\|^{2q-2}_{\L^{2q-4}}\|\nabla\varphi \|^{2}_{\L^{2q-2}} + C_4' \|\nabla \varphi\|^{2}  \no \\
    & \leq C \|\varphi\|^{4q-4}_{\mathrm{H}^1} + C \|\varphi \|^4_{\mathrm{H}^2} + C_4' \|\nabla \varphi\|^2\no \\
    & \leq C (\| \varphi\|^4_{\mathrm{H}^2} + \|\nabla \varphi\|^2+\|\varphi\|^{4q-4}_{\mathrm{H}^1} ).
     \end{align}
  In the above, we have used the Sobolev embedding $\mathrm{H}^1\hookrightarrow\mathrm{L}^p$,  for all $1< p < \infty$. Now, by substituting \eqref{e7} in \eqref{e6} we get
     \begin{align*}
      \|\nabla (\Delta\varphi)\|^2 & \leq \|\nabla \mu\|^2  + C (\| \varphi\|^{2}_{\mathrm{H}^2} + \|\nabla \varphi\|^2 ).
     \end{align*}
     Integrating both sides from $0$ to $T$ we obtain,
     \begin{align*}
         \int_0^T \|\nabla (\Delta\varphi)\|^2 \leq \int_0^T \|\nabla \mu\|^2  + C\int_0^T \big(\| \varphi\|^{4}_{\mathrm{H}^2}+\|\nabla \varphi\|^2+\|\varphi\|^{4q-4}_{\mathrm{H}^1}\big) \leq C.
     \end{align*}
\end{remark}
\begin{remark}\label{mean_esti_phi}
   We note here that, unlike the zero Dirichlet boundary case, the mean $\varphi$ may not be constant $(\text{ for example }\langle\varphi(t)\rangle=\langle\varphi(0)\rangle, \, \text{ for all } t\geq 0).$ But we have $\langle\varphi(t)\rangle$ to be bounded for a.e. $t\geq 0$, indeed, from $\eqref{7}_1$ and integrating by parts we obtain
   \begin{align*}
      \frac{d}{dt}\langle\varphi(t)\rangle=\frac{1}{|\Omega|}\int_\Omega\frac{d\varphi(t)}{dt}= -\frac{1}{|\Omega|}\int_{\Gamma}(\h(t)\cdot n)\varphi(t) \, dS. 
   \end{align*}
   Then integrating from $0$ to $t$ and recalling that $\h$ satisfies Assumption \ref{H} and $(\u, \varphi)$ is corresponding weak solution we have
   \begin{align*}
       |\langle\varphi(t)\rangle|\leq &C_\Omega\big(\langle\varphi(0)\rangle+\|\h(t)\|_{\V^{-\frac{1}{2}}(\Gamma)}\|\varphi(t)\|_{\mathrm{H}^\frac{1}{2}(\Gamma)}\big)\\
       \leq &  C_\Omega\big(\langle\varphi(0)\rangle+\|\h(t)\|_{\V^{\frac{1}{2}}(\Gamma)}\|\varphi(t)\|_{\mathrm{H}^1(\Omega)}\big) < K,
   \end{align*}
   for a.e. $t\in (0, +\infty)$.
\end{remark}
\begin{remark}
  Similarly, if $F$ satisfy (A3) we have $\langle\mu(t)\rangle$ is bounded for a.e. $t\geq 0$, indeed the following estimate holds:
  \begin{align*}
      |\langle\mu(t)\rangle|\leq \frac{1}{|\Omega|}\int_{\Omega}|F'(\varphi(t))| \, dx\leq C\|\varphi(t)\|^q_{\mathrm{H}^1}+C_2 < K_1, \text{ for a.e. $t\geq 0$}.
  \end{align*}
\end{remark}

\section{Continuous dependence} In this section, we discuss the continuous dependence of the solution on the initial and boundary data and deduce the uniqueness of the weak solution of the system \eqref{equ P}. We use the idea of \cite[Theorem 3.1]{GMT}, however, since we do not have a mean zero condition for the concentration equation, we cannot use $A_0^{-1}\varphi$ as a test function. Therefore, we have to modify the idea of \cite{GMT} to apply in our case.

Let $(\u_1, \varphi_1)$ and $(\u_2, \varphi_2)$ be two weak solutions of the system \eqref{equ P} with non-homogeneous boundaries $\h_1$ and $\h_2$ and initial conditions $\u_{i0}$, $\varphi_{i0}$, for $i=1,2$ respectively. Let  $\u_{e_1} \text{ and } \u_{e_2}$ be the corresponding lifting functions respectively. Then, $\overline{\u}_1=\u_1 - \u_{{e_1}}$, and $\overline{\u}_2=\u_2 - \u_{{e_2}}$ are the solutions of the system \eqref{7} corresponding to initial data $(\ou_{10}$, $\varphi_{10}), \, (\ou_{20}$, $\varphi_{20})$ respectively. Let us denote the differences of solutions by $\delta\overline{\u} = \overline{\u}_1 - \overline{\u}_2, \, \delta\varphi = \varphi_1 - \varphi_2$, $\delta\mu= \mu_1-\mu_2$, $\delta\pi=\overline{\pi}_{1}-\overline{\pi}_{2}$ and the nonhomogeneous boundary $\delta\h = \h_1 - \h_2$. Note that $\delta\u_e := \u_{e_1} - \u_{e_2}$ satisfies equation \eqref{equ1} with the boundary data $\delta\h$. Moreover, we set $\delta\varphi_0 = \varphi_{10} - \varphi_{20},  \,  \delta\ou_0 = \ou_{10} - \ou_{20}$. 
\begin{proposition}
   Let $\nu, \, F$ satisfies (A1)-(A5) and (A7). Also, let $(\u_1, \varphi_1)$ and $(\u_2, \varphi_2)$ be two weak solutions of the system \eqref{equ P} with non-homogeneous boundary data $\h_1$ and $\h_2$ respectively. Then, with the notation mentioned above,  for any $T>0$, the difference $(\delta\overline{\u}, \delta\varphi)$ satisfy the following inequality:
   \begin{align}\label{main_fde}
\|\delta\ou\|^2_{\sharp}+\|\delta\varphi\|^2+&\nu_1\int_0^T\|\delta\ou\|^2+\frac{1}{2}\int_0^T\|\Delta(\delta\varphi)\|^2  \leq M_1\Big[\|\delta\ou_0\|^2_{\sharp}+\|\delta\varphi_0\|^2+|\langle\delta\varphi\rangle|^2\no\\&\quad+\|\delta\h\|^2_{\mathrm{H}^1(0,T;\V^{\frac{1}{2}}(\Gamma)} +\|\delta\h\|^2_{\mathrm{L}^2(0, T;\V^\frac{3}{2}(\Gamma)) \cap \mathrm{L}^\infty(0, T; \V^\frac{1}{2}(\Gamma))}\Big], 
   \end{align}
   where  $M_1$ is a positive constant depending on norms of the weak solutions $(\ou_1,\varphi_1)$, $(\ou_2, \varphi_2)$ and corresponding boundary data $\h_1, \h_2$. 
    Recall the norm $\|\cdot\|_{\sharp}$ as defined in \eqref{norm}. \end{proposition}
\begin{proof}
 From the definition \ref{weak sol def}, $(\delta\overline{\u}, \delta\varphi)$ obeys the following weak formulation:
\begin{align}
 \langle\partial_t(\delta\varphi), \psi\rangle + &(\delta\ou \cdot \nabla \varphi_1, \psi) + (\ou_2 \cdot \nabla (\delta\varphi), \psi) + (\delta\u_e \cdot \nabla \varphi_1, \psi) + (\u_{e_2} \cdot \nabla(\delta \varphi), \psi)  \no\\&+ (\nabla(\delta\mu), \nabla\psi)=0, \, \, \forall\; \psi\in\mathrm{H}^1, \, \text{ a.e. } t\in (0, T)\label{20} 
  \end{align}
\begin{align}
&\langle\partial_t(\delta\ou),\v\rangle  +(\nu(\varphi_1)\mathrm{D}\ou_1, \nabla\v)- (\nu(\varphi_2) \mathrm{D}\ou_2, \nabla\v) - (\delta\ou\otimes\ou_1, \nabla\v) - (\delta\u_{e} \otimes\ou_1,\nabla\v) \no\\&\quad- (\delta\ou\otimes\u_{e_1}, \nabla\v) + ((\delta\u_e \cdot \nabla) \u_{e_1},\v)- (\ou_2 \otimes\delta\ou,\nabla, \v) - (\u_{e_2} \otimes\delta\ou,\nabla\v) - (\ou_2 \otimes\delta\u_e,\nabla\v) \no\\&\quad+((\u_{e_2}\cdot\nabla)\delta\u_e,\v)  = (\nabla(\delta\varphi)\otimes\nabla \varphi_2,\nabla\v) + (\nabla\varphi_1\otimes \nabla(\delta\varphi),\nabla\v)- \langle\partial_t(\delta\u_e), \v\rangle,  \label{20a}
\end{align}
$\forall\v\in\V_{\text{div}},$ and a.e. $t\in (0,T)$ where $$\delta\mu = -\Delta(\delta\varphi) + F'(\varphi_1) - F'(\varphi_2).$$  Note that, we have used the equalities $$(\u\cdot\nabla)\v=\text{div}(\u\otimes\v)$$ and $$\mu\nabla\varphi=\nabla\Big(\frac{1}{2}|\nabla\varphi|^2+F'(\varphi)\Big)-\text{div}(\nabla\varphi\otimes\nabla\varphi).$$
Now, taking   $\psi=\delta\varphi$ in \eqref{20}, we deduce
 \begin{align}
     \frac{1}{2}\frac{d}{dt}\|\delta\varphi\|^2 +(\nabla(\delta\mu),\nabla(\delta\varphi)) &= -(\delta\ou \cdot \nabla \varphi_1,\delta\varphi) - (\ou_2 \cdot \nabla (\delta\varphi),\delta\varphi)- (\delta\u_e \cdot \nabla \varphi_1, \delta\varphi) \no\\&\quad-((\u_{e_2} \cdot \nabla(\delta \varphi),\delta\varphi).\label{22}
     \end{align}
    Using the expression of $\delta\mu$ we obtain 
     \begin{align}\label{22a}
         \frac{1}{2}\frac{d}{dt}\|\delta\varphi\|^2 +\|\Delta(\delta\varphi)\|^2=&-((F'(\varphi_1)-F'(\varphi_2)), \Delta(\delta\varphi))-(\delta\ou \cdot \nabla \varphi_1,\delta\varphi) - (\ou_2 \cdot \nabla (\delta\varphi),\delta\varphi)\no\\&\quad- (\delta\u_e \cdot \nabla \varphi_1, \delta\varphi)-((\u_{e_2} \cdot \nabla(\delta \varphi), \delta\varphi).
     \end{align}
     Taking $\v=\A^{-1}(\delta\ou)$ in \eqref{20a}, we find
\begin{align}
    &\frac{1}{2}\frac{d}{dt} \|\delta\ou\|^2_{\sharp}+ (\nu(\varphi_1)\mathrm{D}(\delta\ou),\nabla\A^{-1}(\delta\ou))= ((\nu(\varphi_1)-\nu(\varphi_2) \mathrm{D}\overline{\u}_2,{\nabla\A^{-1}(\delta\ou)})+(\delta\ou\otimes\ou_1, \nabla\A^{-1}(\delta\ou))  \no\\&\quad + (\delta\u_{e} \otimes\ou_1,\nabla\A^{-1}(\delta\ou)) + (\delta\ou\otimes\u_{e_1}, \nabla\A^{-1}(\delta\ou)) - ((\delta\u_e \cdot \nabla) \u_{e_1},\A^{-1}(\delta\ou))+ (\ou_2 \otimes\delta\ou,\nabla \A^{-1}(\delta\ou))  \no\\&\quad+ (\u_{e_2} \otimes\delta\ou,\nabla\A^{-1}(\delta\ou)) + (\ou_2 \otimes\delta\u_e,\nabla\A^{-1}(\delta\ou))-((\u_{e_2}\cdot\nabla)\delta\u_e,\A^{-1}(\delta\ou))\no\\&\quad+ (\nabla(\delta\varphi)\otimes\nabla \varphi_2,\nabla\A^{-1}(\delta\ou)) + (\nabla\varphi_1\otimes \nabla(\delta\varphi),\nabla\A^{-1}(\delta\ou))- \langle\partial_t(\delta\u_e), \A^{-1}(\delta\ou)\rangle. \label{21} %
   \end{align}
   By the properties of Stokes operator [cf. \cite[Appendox B]{GMT}], there exists $\delta\pi\in\L^2(0, T, \mathrm{H}^1)$ such that $-\Delta\A^{-1}(\delta\ou)+\nabla(\delta\pi)=\delta\ou$ a.e. in $\Omega\times(0,T)$. Then, following a similar calculation from \cite[Theorem 3.1, (3.9)-(3.11)]{GMT} we write 
   \begin{align}\label{4.7}
      (\nu(\varphi_1)\mathrm{D}(\delta\ou),\nabla\A^{-1}(\delta\ou))\geq\nu_1\|\delta\ou\|^2-(\delta\ou,\nu'(\varphi_1)\mathrm{D}\A^{-1}(\delta\ou)\nabla\varphi_1)+ \frac{1}{2}(\nu'(\varphi_1)\nabla\varphi_1\cdot\delta\ou, \delta\pi).
   \end{align} 
Using \eqref{4.7} in  \eqref{21}, and adding it to \eqref{22a}, after rearranging the terms, we obtain
   \begin{align}\label{4.8}
      &\frac{1}{2}\frac{d}{dt} \big(\|\delta\ou\|^2_{\sharp}+\|\delta\varphi\|^2\big)+\nu_1\|\delta\ou\|^2+\frac{1}{2}\|\Delta(\delta\varphi)\|^2 \leq ((\nu(\varphi_1)-\nu(\varphi_2) \mathrm{D}\overline{\u}_2), \nabla\A^{-1}(\delta\ou)) \no\\&+\big(\delta\ou\otimes\ou_1+\ou_2 \otimes\delta\ou, \nabla\A^{-1}(\delta\ou)\big)+(\nabla(\delta\varphi)\otimes\nabla \varphi_2+\nabla\varphi_1\otimes \nabla(\delta\varphi),\nabla\A^{-1}(\delta\ou))\no\\  &-(\delta\ou,\nu'(\varphi_1)\mathrm{D}\A^{-1}(\delta\ou)\nabla\varphi_1)+ \frac{1}{2}(\nu'(\varphi_1)\nabla\varphi_1\cdot\delta\ou, \delta\pi)- (\ou_2 \cdot \nabla (\delta\varphi),\delta\varphi)-(\delta\ou \cdot \nabla \varphi_1, \delta\varphi)\no\\&- (\delta\u_e \cdot \nabla \varphi_1, \delta\varphi) -((\u_{e_2} \cdot \nabla(\delta \varphi),\delta\varphi)+(\delta\u_{e} \otimes\ou_1,\nabla\A^{-1}(\delta\ou)) + (\delta\ou\otimes\u_{e_1}, \nabla\A^{-1}(\delta\ou)) \no\\&- ((\delta\u_e \cdot \nabla) \u_{e_1},\A^{-1}(\delta\ou))+ (\u_{e_2} \otimes\delta\ou,\nabla\A^{-1}(\delta\ou)) + (\ou_2 \otimes\delta\u_e,\nabla\A^{-1}(\delta\ou))\no\\&-((\u_{e_2}\cdot\nabla)\delta\u_e,\A^{-1}(\delta\ou))-\langle\partial_t(\delta\u_e), \A^{-1}(\delta\ou)\rangle 
      +((F'(\varphi_1)-F'(\varphi_2)), \Delta(\delta\varphi))=\sum_{i=1}^{17}\mathcal{I}_i. 
   \end{align}
 We estimate the terms on RHS one by one. We observe that $\mathcal{I}_2$, $\mathcal{I}_4$, and $\mathcal{I}_5$ take exactly the same form as those in \cite[Theorem 3.1]{GMT}. Hence, directly adopting those estimates we get
  \begin{itemize}
      \item $|\mathcal{I}_2|\leq\frac{\nu_1}{12}\|\delta\ou\|^2+C(\|\ou_1\|^2_{\V_{\text{div}}}+\|\ou_2\|^2_{\V_{\text{div}}})\|\delta\ou\|^2_\sharp,$\\
      \item $|\mathcal{I}_4|\leq\frac{\nu_1}{12}\|\delta\ou\|^2+C\|\varphi_1\|^2_{\mathrm{H}^3}\|\delta\ou\|^2_\sharp,$\\
      \item $|\mathcal{I}_5|\leq \frac{\nu_1}{12}\|\delta\ou\|^2+C\|\varphi_1\|^4_{\mathrm{H}^2}\|\delta\ou\|^2_\sharp.$
  \end{itemize}
  The remaining terms in \eqref{4.8}, namely $\mathcal{I}_i$ for $i = 7, \ldots, 17$, require separate treatment. To estimate these, we make use of the a priori bounds derived in \eqref{estimate h}–\eqref{estimate ht}, the regularity properties of the weak solutions $(\u_i, \varphi_i)$ for $i = 1, 2$, and the $\mathrm{L}^4$ estimate for $\varphi_i$ provided in Remark~\ref{L4esti_phi}. The estimation process involves a combination of classical functional inequalities such as Hölder’s, Young’s, Poincaré’s, Gagliardo–Nirenberg, Agmon’s inequalities, and relevant Sobolev embeddings. Additionally, it is straightforward to verify that $\mathcal{I}_6 = 0$ due to its structure.

The estimates for the remaining terms are briefly presented below:
\begin{itemize}
\item $|\mathcal{I}_3|\leq C(\|\varphi_1\|^2_{\mathrm{H}^2}+\|\varphi_2\|^2_{\mathrm{H}^2})\|\delta\ou\|^2_\sharp+\frac{1}{8}\|\Delta\delta\varphi\|^2+C\|\delta\varphi\|^2,$\\
\item $|\mathcal{I}_7|\leq C\|\varphi_1\|^2_{\mathrm{H}^3}\|\delta\varphi\|^2+\frac{\nu_1}{12}\|\delta\ou\|^2 $,\\
\item $|\mathcal{I}_8|\leq C\|\varphi_1\|^2_{\mathrm{H}^2}\|\delta\varphi\|^2+C\|\delta\u_e\|^2_{\H^1_{\text{div}}}$, \\
    \item $|\mathcal{I}_9|\leq \frac{1}{8}\|\Delta(\delta\varphi)\|^2+C\|\u_{e_2}\|^\frac{2}{3}_{\H^1_{\text{div}}}\|\delta\varphi\|^2$,\\
 \item $|\mathcal{I}_{10}|\leq\frac{1}{2}\|\delta\ou\|^2_{\sharp}+\frac{1}{2}\|\delta\u_e\|^2_{\H^1_{\text{div}}}\|\nabla\ou_1\|^2$,\\
  \item $|\mathcal{I}_{11}|\leq\frac{\nu_1}{12}\|\delta\ou\|^2+C\|\u_{e_1}\|^2_{\H^1_{\text{div}}}\|\delta\ou\|^2_{\sharp}$,\\
    \item $|\mathcal{I}_{12}|\leq\frac{1}{2}\|\delta\ou\|^2_{\sharp}+\frac{1}{2}\|\u_{e_1}\|^2_{\H^1_{\text{div}}}\|\delta\u_{e}\|^2_{\H^1_{\text{div}}}$,\\
    \item $|\mathcal{I}_{13}|\leq\frac{\nu_1}{12}\|\delta\ou\|^2+C\|\u_{e_2}\|^2_{\H^1_{\text{div}}}\|\delta\ou\|^2_{\sharp},$\\
    \item $|\mathcal{I}_{14}|\leq\frac{1}{2}\|\nabla\ou_2\|^2\|\delta\ou\|^2_{\sharp}+\frac{1}{2}\|\delta\u_e\|^2_{\H^1_{\text{div}}},$\\
    \item $|\mathcal{I}_{15}|\leq \frac{1}{2}\|\delta\ou\|^2_{\sharp}+\frac{1}{2}\|\u_{e_2}\|^2_{\H^1_{\text{div}}}\|\delta\u_e\|^2_{\H^1_{\text{div}}},$\\
    \item $|\mathcal{I}_{16}|\leq\frac{1}{2}\|\delta\ou\|^2_{\sharp}+\frac{1}{2}\|\partial_t(\delta\u_e)\|^2$.
   \end{itemize}
    Since $\A^{-1}:\V'_{\text{div}}\to\V_{\text{div}}$, using Poincar\'e inequality we have $\|\A^{-1}(\delta\ou)\|\leq\|\nabla\A^{-1}(\delta\ou)\|$,  which is used in  the above calculations. Now, using (A1) and Lemma \ref{lem4.2} we estimate the first term of \eqref{4.8} as follows:
    \begin{align}
        \mathcal{I}_1= &\Big(\int_0^1\nu'\big(\theta\varphi_1+(1-\theta)\varphi_2\big) d\theta \, \delta\varphi \D\ou_2, \nabla\A^{-1}(\delta\ou)\Big)\no\\
        \leq& C\|\D\ou_2\|\|\delta\varphi\|_{\mathrm{H}^2}\|\delta\ou\|_\sharp\no\\
        \leq & C\|\D\ou_2\|\Big(\|\Delta(\delta\varphi)\|^2+|\langle\delta\varphi\rangle|^2\Big)^\frac{1}{2}\|\delta\ou\|_\sharp\no\\
        \leq & C\|\ou_2\|^2_{\V_{\text{div}}}\|\delta\ou\|^2_\sharp+\frac{1}{8}\|\Delta(\delta\varphi)\|^2+C|\langle\delta\varphi\rangle|^2.
    \end{align}
    Next using (A5) and Young's inequality we obtain the estimate for last term of \eqref{4.8}
    \begin{align}
        |\mathcal{I}_{17}|=&\Big(\int_0^1 F''\big(s\varphi_1+(1-s)\varphi_2\big) ds \, \delta\varphi, \Delta(\delta\varphi)\Big)\no\\ \leq &\frac{1}{8}\|\Delta(\delta\varphi)\|^2+C\big(\sum_{i=1}^2\|\varphi_i\|^{q-1}\|\varphi_i\|^{q-1}_{\mathrm{H}^2}\big)\|\delta\varphi\|^2. {\color{teal} \, q\leq 5}
    \end{align}
    Collecting all the estimates together and substituting in \eqref{4.8}, we are led to the differential inequality
   \begin{align}\label{fde}
    &\frac{1}{2}\frac{d}{dt} \big(\|\delta\ou\|^2_{\sharp}+\|\delta\varphi\|^2\big)+\frac{\nu_1}{2}\|\delta\ou\|^2+\frac{1}{2}\|\Delta(\delta\varphi)\|^2\leq C\Big(\|\varphi_1\|^2_{\mathrm{H}^2}+\|\varphi_2\|^2_{\mathrm{H}^2}+\|\varphi_1\|^2_{\mathrm{H}^3}\no\\&\qquad+\|\u_{e_2}\|^\frac{2}{3}_{\H^1_{\text{div}}}+\|\ou_1\|^2_{\V_{\text{div}}}+\|\ou_2\|^2_{\V_{\text{div}}}+\|\u_{e_1}\|^2_{\H^1_{\text{div}}}+\|\varphi_1\|^4_{\mathrm{H}^2}+1+\sum_{i=1}^2\|\varphi_i\|^{q-1}\|\varphi_i\|^{q-1}_{\mathrm{H}^2}\Big)\no\\&\qquad\quad\times\big(\|\delta\ou\|^2_{\sharp}+\|\delta\varphi\|^2\big)+C(\|\nabla\ou_1\|^2+ \|\u_{e_1}\|^2_{\H^1_{\text{div}}}+\|\u_{e_2}\|^2_{\H^1_{\text{div}}})\|\delta\u_e\|^2_{\H^1_{\text{div}}}\no\\&\qquad\quad+\frac{1}{2}\|\partial_t(\delta\u_e)\|^2+C|\langle\delta\varphi\rangle|^2.
   \end{align}
   Then, using the uniform Gronwall lemma [cf. \cite[Lemma 1.1]{temam1}] in \eqref{fde} and taking advantage of 
   \eqref{e18}-\eqref{estimate ht} and Remark \ref{L4esti_phi}, we obtain the desired estimate \eqref{main_fde}.
   \end{proof}
   As a consequence, we have the following uniqueness of weak solutions:
\begin{lemma}
Given initial data $(\u_0,\varphi_0)\in\V^0(\Omega)\times\mathrm{H}^1$ and boundary data $\h$ satisfying Assumption \ref{H}, the weak solution of \eqref{equ P} on $[0,T)$ is unique for any $T>0$.
\end{lemma}
\section{Strong Solution}
In this section, we establish the existence of a strong solution to \eqref{equ P}. The proof is carried out using the Faedo-Galerkin approximation method. To proceed, we first define the notion of a strong solution.
\begin{definition}\label{strong sol def}
Let $T$ be a positive constant.
A pair $(\u, \varphi)$ is said to be a strong solution of the system \eqref{equ P} if it is a weak solution, and, in addition, fulfills the following regularity:
\begin{equation}\label{stng_reg}
\left\{
\begin{aligned}
    \u & \in \L^\infty(0, T; \V^1(\Omega)) \cap \L^2(0, T; \V^2(\Omega)),\\
    \varphi & \in \mathrm{L}^\infty(0, T; \mathrm{H}^2) \cap \mathrm{L}^2(0, T; \mathrm{H}^4),\\
    \u_t&\in \L^2(0, T; \V^0(\Omega)),\\
    \varphi_t &\in \L^2(0, T; \L^2),\\
    \mu&\in\L^2(0,T; \mathrm{H}^2).\\
\end{aligned}
\right.
\end{equation}
\end{definition}
\begin{remark}
Since $\mathrm{H}^2(\Omega)$ can be continuously embedded in $C(\overline{\Omega}),$ any strong solution $\varphi$ is continuous on $[0, T)\times\overline{\Omega.}$
\end{remark}

\begin{theorem}\label{st}
Let  $(\u_0,\varphi_0 ) \in \V^1(\Omega)\times \mathrm{H}^2$.  Let $\nu$, $F$ satisfies Assumption \ref{prop of F}, and $\h$ satisfy the Assumption \ref{H}.
and the compatibility condition \eqref{compatibility}. Then, there exists a unique global strong solution of the system \eqref{equ P} in the sense of Definition \ref{strong sol def}.
\end{theorem}
\begin{proof}

As before let us define $\overline{\u}=\u -\u_e$, where $(\u,\varphi)$ is a weak solution of the system  \eqref{equ P}. and $\u_e $ is elliptic lifting. 
Note that by theorem \ref{elliptic lifting}, we have  the requisite regularity of $\u_e$ and hence it is sufficient to prove that $\overline{\u}$ and $\varphi$  have the regularity as in \ref{stng_reg}. 

Recall, $ (\overline{\u}, \varphi) $ is a weak solution of the system \eqref{7}. 
Let $\mathrm{P}:\mathbb{L}^2(\Omega) \rightarrow \G_{\text{div}}$ be the Helmholtz-Hodge orthogonal projection. We take the projection of equation $\eqref{7}_3$ to eliminate the pressure term, which can be recovered later using \cite[Proposition 1.1]{temam}. As we did for a weak solution, we are going to derive  appropriate a priori estimates using weak formulation of system of Galerkin approximations \eqref{dist for u}-\eqref{dist for mu} defined earlier. As before the aim is to get uniform estimates in the requisite spaces which will allow us to pass to the limit. The calculations given in the sequel are formal and can be justified with the help uniform estimates. In order to simplify the notation, we drop the superscript $n$ in $\overline{\u}^n, \varphi^n, \mu^n$.
Using more regular test functions namely $\A\overline{\u}$ as $\v $  and  $\Delta^2\varphi$  as $\psi$ in \eqref{dist for u} and \eqref{dist for phi} respectively, and using Remark \ref{nu} we get 
\begin{align}
    \frac{1}{2}\frac{d}{dt} \|\nabla\overline{\u}\|^2 &+  \nu \|\Delta\overline{\u}\|^2 \leq  -b(\overline{\u}, \overline{\u}, \A\overline{\u}) - b(\overline{\u}, \u_e, \A\overline{\u}) - b(\u_e, \overline{\u}, \A \overline{\u}) \no\\
    &\qquad- b(\u_e, \u_e, \A\overline{\u}) +(\mu \nabla \varphi, \A\overline{\u}) - \langle \partial_t \u_e, \A\overline{\u} \rangle = \sum_{i=1}^6 I_i.  \label{u strong form}
 \end{align}
 and
 \begin{align}
    \frac{1}{2}\frac{d}{dt} \|\Delta \varphi\|^2 = -(\overline{\u} \cdot \nabla \varphi, \Delta^2\varphi) - (\u_e \cdot \nabla \varphi, \Delta^2\varphi) + (\Delta \mu, \Delta^2\varphi) =  \sum_{i=1}^3 J_i. \label{phi strong form}
 \end{align}
By applying Gagliardo-Nirenberg inequality, Young's inequality, Poincar\'e inequality,  Agmon's inequality, Sobolev embeddings, and \eqref{prop2.1}, estimations for all the terms in the R.H.S. of equation \eqref{u strong form} are done. We have the following list of estimates:
\begin{itemize}
    \item $|I_1| \leq   \|\overline{\u}\|_{\mathbb{L}^4} \|\nabla\overline{\u}\|_{\mathbb{L}^4} \, \|\Delta\overline{\u}\| \leq C \|\overline{\u}\|_{\mathbb{L}^4}\|\overline{\u}\|^{\frac{3}{4}}  \|\Delta\overline{\u}\|^{\frac{5}{4}} 
     \leq C \|\overline{\u}\|^\frac{4}{3} \|\nabla\overline{\u}\|^2 + \frac{\nu}{12}\|\Delta\overline{\u}\|^2$,\\
   \item  $|I_2|   \leq \|\overline{\u}\|_{\mathbb{L}^4} \, \|\nabla \u_e\|_{\mathbb{L}^4} \|\Delta\overline{\u}\|  \leq C \|\nabla \overline{\u}\|^2 \|\u_e\|^2_{\V^2(\Omega)} + \frac{\nu}{12} \|\Delta\overline{\u}\|^2$,\\
 \item  $|I_3|   \leq \|\u_e\|_{\mathbb{L}^\infty} \|\nabla\overline{\u}\|\|\Delta \overline{\u}\|
    \leq C \|\u_e\|^2_{\V^2(\Omega)} \|\nabla\overline{\u}\|^2 + \frac{\nu}{12}\|\Delta \overline{\u}\|^2,$\\
   \item $|I_4|  \leq \|\u_e\|_{\mathbb{L}^4} \, \|\nabla\u_e\|_{\mathbb{L}^4} \, \|\Delta\overline{\u}\| \leq C \|\u_e\|^2_{\V^1(\Omega)} \|\u_e\|^2_{\V^2(\Omega)} + \frac{\nu}{12} \|\Delta\overline{\u}\|^2,$\\
     \item  $|I_5| \leq   C \|\mu \nabla \varphi\| \, \|\Delta\overline{\u}\|  \leq C \|\mu\|_{\mathrm{H}^1} \|\nabla\varphi\| \Big( \log e \frac{\|\nabla \varphi\|_{\mathrm{H}^1}}{\|\nabla \varphi\|} \Big) \, \|\Delta\overline{\u}\|  \leq C \|\mu\|^2_{\mathrm{H}^1} \|\nabla\varphi\|^2 + \frac{\nu}{12} \|\Delta\overline{\u}\|^2,$\\
     \item $|I_6|  \leq C\|\partial_t \u_e\|^2 + \frac{\nu}{12} \|\Delta\overline{\u}\|^2.$  
     \end{itemize}
     Substituting all the above estimates of $I_1$ to $I_6$ into the equation \eqref{u strong form}, we get the following expression:
    \begin{align}\label{e15}
\frac{1}{2}\frac{d}{dt}\|\nabla\overline{\u}\|^2 + \frac{\nu}{2} \|\Delta \overline{\u}\|^2 & \leq C \big( \|\u_e\|_{\V^1(\Omega)} ^2 \|\u_e\|^2_{\V^2(\Omega)} + \|\mu\|^2_{\mathrm{H}^1} \|\nabla\varphi\|^2 +\|\partial_t \u_e\| ^2  \big) \no \\ &  + C\big( \|\overline{\u}\|^\frac{4}{3} + \|\u_e\|^2_{\V^2(\Omega)} \big) \|\nabla\overline{\u}\|^2.
    \end{align}
    Utilizing a combination of  Ladyzhenskaya, Young's, Poincar\'e, Gagliardo-Nirenberg, Agmon's inequalities, and Sobolev embeddings again, we proceed to estimate all terms appearing on the R.H.S. of equation \eqref{phi strong form} as follows:
        \begin{align}\label{e10}
        &|J_1|  \leq \|\overline{\u}\|_{\mathbb{L}^4} \|\nabla\varphi\|_{\mathrm{L}^4} \|\Delta^2\varphi\| \leq C \|\nabla \overline{\u}\| \, \|\varphi\|_{\mathrm{H}^2} \, \|\Delta^2\varphi\| \no \\
        & \qquad \leq C \|\nabla \overline{\u}\|^2 \, \|\varphi\|^2_{\mathrm{H}^2} + \frac{1}{4} \|\Delta^2\varphi\|^2,\\
        &|J_2| \leq \|\u_e\|_{\mathbb{L}^{\infty}} \|\nabla\varphi\| \|\Delta^2\varphi\|  \leq C \|\u_e\|^2_{\V^2(\Omega)} \|\nabla\varphi\|^2 + \frac{1}{4}  \|\Delta^2\varphi\|^2, \\
        &J_3= (\Delta\mu, \Delta^2\varphi)  = (\Delta (- \Delta\varphi + F'(\varphi)), \Delta^2\varphi) \no \\
        & \qquad= - \|\Delta^2\varphi\|^2 + (\Delta F'(\varphi), \Delta^2\varphi) \no \\
        & \qquad= - \|\Delta^2\varphi\|^2 + (F'''(\varphi) (\nabla\varphi)^2, \Delta^2\varphi) + (F''(\varphi) \Delta\varphi, \Delta^2\varphi).\label{e4}
        \end{align}
 Combining  \eqref{e10}-\eqref{e4} into \eqref{phi strong form}, we arrive at following inequality:
  \begin{align}\label{6.9}
      \frac{1}{2}\frac{d}{dt} \|\Delta \varphi\|^2  + \frac{1}{2} \|\Delta^2\varphi\|^2 &\leq C \|\nabla \overline{\u}\|^2 \, \|\varphi\|^2_{\mathrm{H}^2}  + C \|\u_e\|^2_{\V^2(\Omega)} \|\nabla\varphi\|^2  \no \\
     & \quad +(F'''(\varphi) (\nabla\varphi)^2, \Delta^2\varphi) + (F''(\varphi) \Delta\varphi, \Delta^2\varphi).
  \end{align}
 Now, we estimate the last two terms on the R.H.S. of \eqref{6.9} using (A5), (A6), and Gagliardo-Nirenberg inequality:
    \begin{align}\label{e9}
    (F'''(\varphi) |\nabla\varphi|^2, \Delta^2\varphi) & \leq \|F'''(\varphi)\|_{\L^4} \||\nabla\varphi|^2\|_{\L^4}\|\Delta^2\varphi\| \no \\
     & \leq C_5(1+\|\varphi\|^{q-2}_{\L^{4q-8}})\|\nabla\varphi\|^2_{\L^8} \|\Delta^2\varphi\| \no \\
     & \leq C (1+\|\varphi\|^{q-2}_{\mathrm{H}^1})\|\varphi\|^2_{\mathrm{H}^2} \|\Delta^2\varphi\|\no  \\
     & \leq  \frac{1}{8} \|\Delta^2\varphi\|^2 +C (1+\|\varphi\|^{2q-4}_{\mathrm{H}^1})\|\varphi\|^4_{\mathrm{H}^2}.
     \end{align}
   Similarly, we have
   \begin{align}\label{e13}
       (F''(\varphi) \Delta\varphi, \Delta^2\varphi) &\leq \|F''(\varphi)\|_{\L^4}\|\Delta \varphi\|_{\L^4} \|\Delta^2\varphi\| \no \\
      &\leq  C(1+\|\varphi\|^{q-1}_{\L^{4q-4}}) \|\Delta \varphi\|_{\L^4} \|\Delta^2\varphi\| \no \\
      &\leq  \frac{1}{8} \|\Delta^2\varphi\|^2+C \| \varphi\|^2_{\mathrm{H}^3}(1+\|\varphi\|^{2q-2}_{\mathrm{H}^1}) 
   \end{align}
   Now, substituting \eqref{e9}-\eqref{e13} in \eqref{6.9}, we finally get
\begin{align}\label{e14}
     \frac{1}{2}\frac{d}{dt} \|\Delta \varphi\|^2 + \frac{1}{4} \|\Delta^2\varphi\|^2  \leq &C \|\nabla \overline{\u}\|^2 \, \|\varphi\|^2_{\mathrm{H}^2}  + C \|\u_e\|^2_{\V^2(\Omega)} \|\nabla\varphi\|^2 + C \| \varphi\|^2_{\mathrm{H}^3}(1+\|\varphi\|^{2q-2}_{\mathrm{H}^1}) \no \\
     & + C (1+\|\varphi\|^{2q-4}_{\mathrm{H}^1})\|\varphi\|^4_{\mathrm{H}^2}.
\end{align}
Adding \eqref{e15} and \eqref{e14}, we  have
\begin{align}\label{e16}
\frac{d}{dt} \Big( \|\nabla\overline{\u}\|^2 &+ \|\Delta \varphi\|^2\Big) +  \nu \|\Delta \overline{\u}\|^2 + \frac{1}{2} \|\Delta^2\varphi\|^2   \leq C\big( \|\overline{\u}\|^\frac{4}{3}  + \|\u_e\|^2_{\V^2(\Omega)} +\|\varphi\|^2_{\mathrm{H}^2}  \big)\no\\&\times (\|\nabla\overline{\u}\|^2+\|\Delta\varphi\|^2) +C\Big(
\|\u_e\|_{\V^1(\Omega)}^2 \|\u_e\|^2_{\V^2(\Omega)} + \|\mu\|^2_{\mathrm{H}^1} \|\nabla\varphi\|^2 +\|\partial_t\u_e\|^2  \no \\
&\quad+  \|\u_e\|^2_{\V^2(\Omega)} \|\nabla\varphi\|^2 +  \|\varphi\|^2_{\mathrm{H}^3}(1+\|\varphi\|^{2q-2}_{\mathrm{H}^1}) +(1+\|\varphi\|^{2q-4}_{\mathrm{H}^1})\|\varphi\|^4_{\mathrm{H}^2} \Big).
\end{align}
By integrating \eqref{e16} from $0$ to $t$,  we get
\begin{align}
    \|\nabla \overline{\u}(t)\|^2 &+ \|\Delta \varphi(t)\|^2 +  \nu\int_0^t \|\Delta \overline{\u}\|^2 + \frac{1}{2} \int_0^t \|\Delta^2\varphi\|^2 \leq \Big( \|\nabla\overline{\u}_0\|^2 + \|\Delta \varphi_0\|^2\Big) \no\\ 
&+ C \int_0^t \big(\|\overline{\u}\|^\frac{4}{3} + \|\u_e\|^2_{\V^2(\Omega)}  +\|\varphi\|^2_{\mathrm{H}^2} \big) (\|\nabla\overline{\u}\|^2+\|\Delta\varphi\|^2)   \no \\
&+C \int_0^t \big( \|\u_e\|_{\V^1(\Omega)} ^2 \|\u_e\|^2_{\V^2(\Omega)} + \|\mu\|^2_{\mathrm{H}^1} \|\nabla\varphi\|^2 +\|\partial_t\h\|^2_{\V^{\frac{1}{2}}(\Gamma)} + \|\u_e\|^2_{\V^2(\Omega)} \|\nabla\varphi\|^2 \no \\
& \qquad\qquad+   \|\varphi\|^2_{\mathrm{H}^3}(1+\|\varphi\|^{2q-2}_{\mathrm{H}^1}) +(1+\|\varphi\|^{2q-4}_{\mathrm{H}^1})\|\varphi\|^4_{\mathrm{H}^2}\big) .
\end{align}
Now, using Gronwall inequality  we obtain
\begin{align}\label{e17}
  \|\nabla\overline{\u}(t)\|^2 &+ \|\Delta \varphi(t)\|^2 +  \nu \int_0^t \|\Delta \overline{\u}\|^2 +  \frac{1}{2}\int_0^t \|\Delta^2\varphi\|^2 \leq \Big( \|\nabla\overline{\u}_0\|^2 + \|\Delta \varphi_0\|^2 \no\\   
  &+C \int_0^t \big(\|\u_e\|_{\V^1(\Omega)} ^2 \|\u_e\|^2_{\V^2(\Omega)} + \|\mu\|^2_{\mathrm{H}^1} \|\nabla\varphi\|^2 +\|\partial_t\h\|^2_{\V^{\frac{1}{2}}(\Gamma)}  \no \\
&\qquad\qquad+  \|\u_e\|^2_{\V^2(\Omega)} \|\nabla\varphi\|^2 + (1+\|\varphi\|^{2q-2}_{\mathrm{H}^1}) +(1+\|\varphi\|^{2q-4}_{\mathrm{H}^1})\|\varphi\|^4_{\mathrm{H}^2}\big)\Big) \no \\
&\times exp \int_0^t  \Big(\|\overline{\u}\|^\frac{4}{3}  + \|\u_e\|^2_{\V^2(\Omega)}  +\|\varphi\|^2_{\mathrm{H}^2} \Big),
\end{align}
for all $t \in [0, T).$
The R.H.S. of \eqref{e17} is finite due to Remark \ref{L4esti_phi}, $(\overline{\u},\varphi)$ satisfies \eqref{weak 2} and $\u_e$ satisfies \eqref{e18}. Thus we have,
\begin{align}\label{e18a}
   \|\nabla\overline{\u}(t)\|^2 &+ \|\Delta \varphi(t)\|^2 +  \nu\int_0^t \|\Delta \overline{\u}\|^2 +\int_0^t \|\Delta^2\varphi\|^2 \leq C , \ \ \forall t \in [0,T), 
\end{align}
which gives $\overline{\u} \in \mathrm{L}^\infty(0,T;\V_{\text{div}}) \cap \mathrm{L}^2(0,T;\H^2(\Omega))$ by using the elliptic regularity \cite[Theorem 3.1.2.1]{PG85}. Thus we have $\u  \in \mathrm{L}^\infty(0, T; \V^1(\Omega)) \cap \mathrm{L}^2(0, T; \V^2(\Omega))$. Furthermore, together with the no-flux boundary condition on $\varphi$ we get $\varphi  \in \mathrm{L}^\infty(0,T;\mathrm{H}^2).$ Combining Remark \ref{L4esti_phi}, estimate \eqref{e18a} and the no-flux boundary condition on $\mu$ we have  $\varphi  \in   \mathrm{L}^2(0, T; \mathrm{H}^4)$. 
  Also, we note that with this regularity of $\varphi$, Assumption \ref{prop of F} and using similar estimate as in \eqref{e9} and \eqref{e13}, we have 
   \begin{align}
       \int_0^t\|\Delta\mu\|^2\leq \int_0^t\|\Delta^2\varphi\|^2 + \int_0^t\|\Delta F'(\varphi)\|^2 <\infty.
   \end{align}
   Next, we provide a brief estimate for the time derivatives of $\overline{\u}$ and $\varphi$. From \eqref{ut projection} and \eqref{phit projection}, we obtain
     \begin{align}
         \int_0^t\|\partial_t\overline{\u}\|^2&\leq \nu\int_0^t\|\Delta\overline{\u}\|^2+\big(\|\overline{\u}\|^2_{\L^\infty(0, t;\V_{\text{div}})}+\|\u_e\|^2_{\mathrm{L}^\infty(0,T; \V^1(\Omega))}\big)\int_0^t\|\u_e\|^2_{\V^2(\Omega)}\no\\&\quad+\|\varphi\|^2_{\L^\infty(0, t; \mathrm{H}^2)}\int_0^t\|\mu\|^2_{\mathrm{H}^1}+\int_0^t\|\partial_t\h\|^2_{\V^{\frac{1}{2}}(\Gamma)} \quad\leq C\no.
     \end{align}
     and 
     \begin{align}
         \int_0^t\|\partial_t\varphi\|^2 &\leq (\|\u_e\|_{\L^\infty(0, t; \V^1)}+\|\overline{\u}\|_{\L^\infty(0, t; \V^1)})\int_0^t\|\nabla\varphi\|^2_{\mathbb{L}^4}+\int_0^t\|\Delta\mu\|^2
         \leq C.
     \end{align}
     Then, using similar reasoning as in Step 6, Step 7 of Theorem \ref{thm4.2}, we conclude that there exists a solution $(\overline{\u}, \varphi)$ satisfying \eqref{stng_reg}.
     Thus, we finally have  $\u  \in \mathrm{L}^\infty(0, T; \V^1(\Omega)) \cap \mathrm{L}^2(0, T; \V^2(\Omega))$ and $\varphi  \in \mathrm{L}^\infty(0, T;\mathrm{H}^2) \cap \mathrm{L}^2(0, T;\mathrm{H}^4)$.
     This completes the proof.
\end{proof}

\end{document}